\documentclass[reqno,12pt,letterpaper]{amsart}
\usepackage{amsmath,amssymb,amsthm,graphicx,mathrsfs,url}
\usepackage[usenames,dvipsnames]{color}
\usepackage[colorlinks=true,linkcolor=Red,citecolor=Green]{hyperref}
\usepackage{amsxtra}
\usepackage{tikz}
\usepackage{amscd}
\usetikzlibrary{arrows.meta}
\usepackage[normalem]{ulem}
\usepackage{wasysym} % for "\varhexagon" macro
\usepackage{graphicx}% for "\scalebox" macro
\usepackage{subcaption}
%\usepackage{mathtools}
%\mathtoolsset{showonlyrefs=true}

\usepackage{graphicx,color}

\def\arXiv#1{\href{http://arxiv.org/abs/#1}{arXiv:#1}}
\usepackage{array}
\newcolumntype{P}[1]{>{\centering\arraybackslash}m{#1}}

\def\wrtext#1{\relax\ifmmode{\leavevmode\hbox{#1}}\else{#1}\fi}

\setlength{\marginparwidth}{0.6in}

\def\?[#1]{\textbf{[#1]}\marginpar{\Large{\textbf{??}}}}

\def\smallsection#1{\smallskip\noindent\textbf{#1}.}
\let\epsilon=\varepsilon % sorry Knuth

\setlength{\textheight}{8.50in} \setlength{\oddsidemargin}{0.00in}
\setlength{\evensidemargin}{0.00in} \setlength{\textwidth}{6.08in}
\setlength{\topmargin}{0.00in} \setlength{\headheight}{0.18in}
\setlength{\marginparwidth}{1.0in}
\setlength{\abovedisplayskip}{0.2in}
\setlength{\belowdisplayskip}{0.2in}
\setlength{\parskip}{0.05in}

\newcommand{\RR}{{\mathbb R}}

\newcommand{\CC}{{\mathbb C}}
\newcommand{\TT}{{\mathbb T}}
\newcommand{\ZZ}{{\mathbb Z}}

% Include all pictures as MetaPOST
\DeclareGraphicsRule{*}{mps}{*}{}

\newtheorem*{thm}{Theorem}
\newtheorem{prop}{Proposition}

\newtheorem{lemm}[prop]{Lemma}

\numberwithin{equation}{section}

\DeclareMathOperator{\Spec}{Spec}

\let\Re=\Real

\DeclareMathOperator{\supp}{supp}

\usepackage{scalerel}

\newcommand\reallywidehat[1]{\arraycolsep=0pt\relax%
\begin{array}{c}
\stretchto{
  \scaleto{
    \scalerel*[\widthof{\ensuremath{#1}}]{\kern-.5pt\bigwedge\kern-.5pt}
    {\rule[-\textheight/2]{1ex}{\textheight}} %WIDTH-LIMITED BIG WEDGE
  }{\textheight} %
}{0.5ex}\\           % THIS SQUEEZES THE WEDGE TO 0.5ex HEIGHT
#1\\                 % THIS STACKS THE WEDGE ATOP THE ARGUMENT
\rule{-1ex}{0ex}
\end{array}
}

\begin{document}

\title[Discrete vs.~continuous in the semiclassical limit]{Discrete vs. continuous in the semiclassical limit}

\author{Simon Becker}
\address[Simon Becker]{ETH Zurich, 
Institute for Mathematical Research, 
Rämistrasse 101, 8092 Zurich, 
Switzerland}
\email{simon.becker@math.ethz.ch}

\author{Jens Wittsten}
\address[Jens Wittsten]{Department of Engineering, University of Bor{\aa}s, SE-501 90 Bor{\aa}s, Sweden}
\email{jens.wittsten@hb.se}

\author{Maciej Zworski}
\address[Maciej Zworski]{Department of Mathematics, University of California,
Berkeley, CA 94720, USA}
\email{zworski@math.berkeley.edu}

\begin{abstract}
We compare the bottom of the spectrum of discrete and continuous Schrödinger operators with periodic potentials with barriers at the boundaries of their fundamental domains (see Figure~\ref{fig:barrier}). Our results show that these energy levels coincide in the semiclassical limit and we provide an explicit rate of convergence. We demonstrate the optimality of our results by using Bohr-Sommerfeld quantization conditions for potentials exhibiting non-degenerate wells, and by numerical experiments for more general potentials. 
We also investigate the dependence of the spectrum of the discrete semiclassical Schr\"odinger operator on the semiclassical parameter $h$ and show that it can be discontinuous. 
\end{abstract}

\maketitle

\section{Introduction and statement of results}

Suppose that $ V \in C^\infty ( \mathbb R^n; \mathbb R ) $ is periodic with respect to $ \mathbb Z^n $:
$ V ( x + \gamma ) = V ( x ) $, $ \gamma \in \mathbb Z^n $. We make the following general 
assumptions on $ V $ (see Figure~\ref{fig:barrier}):
\begin{equation}
\label{eq:defV}
\text{$\exists $ a fundamental domain of $ \mathbb Z^n $,  $ F$, 
such that $ V|_{\partial F }  >  \min V $.} 
\end{equation}
To $ V$ we associate discrete and continuous Schr\"odinger operators,
\begin{equation}
\label{eq:Pd}
(P_{\rm{d}} ( h ) v )( \gamma ) :=  \sum_{ j=1}^n ( 2v ( \gamma ) - v ( \gamma + e_j ) - v ( \gamma - e_j )  ) +
V ( h \gamma ) v ( \gamma ), \ \ \gamma \in \mathbb Z^n , 
\end{equation}
and
\begin{equation}
\label{eq:Pc}
P_{\rm{c}} ( h ) u ( x )  :=  - \Delta u (x ) + V ( h x ) u ( x) , \ \ x \in \mathbb R^n ,
\end{equation}
which are  selfadjoint on $ \ell^2 ( \mathbb Z^n )$ and $ L^2 ( \mathbb R^n )$ (with domain $ H^2 ( \mathbb R^n ) $), respectively.

Motivated by questions considered by Detherage--Stier--Srivastava \cite{DSS} (we refer to that 
paper for background and pointers to related work) we provide the following 
general result:

\begin{thm} 
\label{thm:main} Suppose that $ V $ satisfies \eqref{eq:defV} and $ P_{\bullet } $ are defined
in \eqref{eq:Pd} and  \eqref{eq:Pc}. Then 
\begin{equation}
\label{eq:comp}    d(h) := \frac{ \min \Spec ( P_{\rm{d}} ( h ) ) }{ \min \Spec ( P_{\rm{c}} ( h ) ) } = 1 + \mathcal O ( h ) . 
\end{equation}
\end{thm}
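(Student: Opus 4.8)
The plan is first to pin both spectral bottoms to $\min V$, which disposes of \eqref{eq:comp} outside one delicate regime, and then to carry out a sharp semiclassical comparison inside that regime.

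\textbf{Step 1 (locating the bottoms).} Since the discrete Laplacian in \eqref{eq:Pd} and $-\Delta$ in \eqref{eq:Pc} are nonnegative and $V\ge\min V$, one gets the trivial lower bounds $\min\Spec(P_\bullet(h))\ge\min V$ for $\bullet\in\{\mathrm d,\mathrm c\}$. For the matching upper bounds I would use the standard concentrated trial function: choosing $x_0$ with $V(x_0)=\min V$ (an interior point of the well, by \eqref{eq:defV}) and $0\le\psi\in C_c^\infty(\mathbb R^n)$, test $P_{\mathrm c}(h)$ against $x\mapsto\psi\bigl(h^{1/2}(x-h^{-1}x_0)\bigr)$ and $P_{\mathrm d}(h)$ against $\gamma\mapsto\psi\bigl(h^{1/2}(\gamma-\lfloor h^{-1}x_0\rfloor)\bigr)$; using $\nabla V(x_0)=0$, on the supports the potential exceeds $\min V$ only by $\mathcal O(h)$ and the kinetic part is $\mathcal O(h)$ as well, so $\min\Spec(P_\bullet(h))=\min V+\mathcal O(h)$. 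Writing $\mu_\bullet(h):=\min\Spec(P_\bullet(h))-\min V\in[0,Ch]$, the theorem reduces to the single estimate $\mu_{\mathrm d}(h)=\mu_{\mathrm c}(h)\bigl(1+\mathcal O(h)\bigr)+\mathcal O(h^\infty)$: if $\min V\ne0$ the denominator in \eqref{eq:comp} stays bounded away from $0$ and $\mu_{\mathrm d}-\mu_{\mathrm c}=\mathcal O(h)$ already suffices, while if $\min V=0$ this estimate \emph{is} \eqref{eq:comp}.

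\textbf{Step 2 (sharp comparison of $\mu_{\mathrm d}$ and $\mu_{\mathrm c}$).} After the unitary rescaling $x\mapsto hx$, $P_{\mathrm c}(h)$ becomes the periodic semiclassical operator $-h^2\Delta+V$ on $\mathbb R^n$, so by Floquet--Bloch theory $\min\Spec(P_{\mathrm c}(h))$ is a lowest Bloch eigenvalue; let $\psi_0$ be a corresponding ground state on a fundamental cell, $L^2$-normalized. Here \eqref{eq:defV} enters: since $V>\min V$ on $\partial F$, Agmon estimates give $|\psi_0|+|\nabla\psi_0|=\mathcal O(e^{-c/h})$ on $\partial F$, so extending $\psi_0$ quasi-periodically and multiplying by a cutoff equal to $1$ on $F$ produces, back in the variable $x$, a compactly supported $\Psi\in C_c^\infty(\mathbb R^n)$ with $\langle P_{\mathrm c}(h)\Psi,\Psi\rangle=\min\Spec(P_{\mathrm c}(h))\,\|\Psi\|_{L^2}^2\bigl(1+\mathcal O(e^{-c/h})\bigr)$ that is moreover \emph{slowly varying}: the a priori bound $\langle-\Delta\Psi,\Psi\rangle\lesssim\mu_{\mathrm c}(h)\|\Psi\|^2\lesssim h\|\Psi\|^2$ together with the concentration of $\psi_0$ at the well forces $\widehat\Psi$ to live in $\{|\xi|\lesssim h^{1/2}\}$ up to rapidly decaying tails. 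Restricting to the lattice, $v:=\Psi|_{\mathbb Z^n}$, Poisson summation and this momentum localization give $\|v\|_{\ell^2}^2=\|\Psi\|_{L^2}^2\bigl(1+\mathcal O(h^\infty)\bigr)$ and $\sum_\gamma V(h\gamma)|v(\gamma)|^2=\int_{\mathbb R^n}V(hx)|\Psi(x)|^2\,dx+\mathcal O(h^\infty)$, and the decisive point is the elementary identity $\sum_{j}2(1-\cos\xi_j)=|\xi|^2+\mathcal O(|\xi|^4)$: on the relevant window this agrees with $|\xi|^2$ up to the relative error $\mathcal O(|\xi|^2)=\mathcal O(h)$, so $\langle\Delta_{\mathrm d}v,v\rangle=\langle-\Delta\Psi,\Psi\rangle\bigl(1+\mathcal O(h)\bigr)$. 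Adding up gives $\min\Spec(P_{\mathrm d}(h))\le\min\Spec(P_{\mathrm c}(h))\bigl(1+\mathcal O(h)\bigr)+\mathcal O(h^\infty)$, i.e.\ $\mu_{\mathrm d}\le\mu_{\mathrm c}(1+\mathcal O(h))+\mathcal O(h^\infty)$. The reverse inequality I would obtain symmetrically: take a ground state of $P_{\mathrm d}(h)$ — for $h^{-1}\in\mathbb N$ produced by discrete Floquet--Bloch on the torus $(\mathbb Z/h^{-1}\mathbb Z)^n$ and localized via \eqref{eq:defV}, and for general $h$ built as a sufficiently good quasimode by hand — and interpolate it to $L^2(\mathbb R^n)$ by convolution with a fixed band-limited kernel, using the same symbol estimate in the form $|\xi|^2=\bigl(\sum_j2(1-\cos\xi_j)\bigr)(1+\mathcal O(h))$ on the window.

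\textbf{Main obstacle.} Step~1 and the identity $2(1-\cos t)=t^2+\mathcal O(t^4)$ are elementary and are exactly what produce the rate $\mathcal O(h)$; the work is in making the passage between the discrete and continuous problems precise when $\min V=0$, where $\mu_\bullet(h)$ is itself $\mathcal O(h)$ — and for a quartic well already $\mu_{\mathrm c}(h)\asymp h^{4/3}$, smaller still for flatter minima — so one needs the comparison with \emph{relative} accuracy $\mathcal O(h)$, hence absolute accuracy $o(\mu_\bullet)$. This forces (i) genuine semiclassical regularity and concentration of the ground state, of the type $\|\partial^\alpha\psi_0\|\lesssim h^{-|\alpha|/2}$, so that its Fourier mass beyond $|\xi|\sim h^{1/2}$ is $\mathcal O(h^\infty)$ and not merely $\mathcal O(h)$ — a bare near-minimizer, whose high-frequency tail is only $\mathcal O(h^{1/2})$, is useless here since the kinetic term is already of size $h$; (ii) the localization to a single fundamental domain, which is precisely what \eqref{eq:defV} buys; and (iii) handling the discrete operator for irrational $h$, where $V(h\cdot)$ is not periodic and the bottom of the spectrum need not be an eigenvalue, so the quasimode entering the reverse inequality has no off-the-shelf construction. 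I expect (i) and (iii), and their interaction with possibly degenerate minima, to be the technical heart of the argument.
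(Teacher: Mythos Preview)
Your outline has the two essential ingredients right: the barrier \eqref{eq:defV} localizes the ground state to a single well, and the symbol identity $\sum_j 2(1-\cos\xi_j)=|\xi|^2+\mathcal O(|\xi|^4)$ is exactly what produces the relative error $\mathcal O(h)$. The paper, however, organizes the argument differently and in doing so sidesteps both of your obstacles (i) and (iii).

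Rather than restricting/interpolating between $\ell^2(\mathbb Z^n)$ and $L^2(\mathbb R^n)$ via Poisson summation, the paper passes to the dual picture $H=\sum_j 2(1-\cos x_j)+V(hD_x)$ on $L^2(\mathbb T^n)$ and $P=-h^2\Delta+V$ on $L^2(\mathbb R^n)$, then modifies $V$ to a confining $V_0$ so that the resulting $H_0,P_0$ have discrete spectrum near~$0$. A Cotlar--Stein parametrix (Lemma~\ref{l:local}) gives $\min\Spec(H)=\min\Spec(H_0)+\mathcal O(h^\infty)$, and the localized ground state of $H_0$ becomes, after a semiclassical Fourier transform, a quasimode for an intermediate operator $P_1=A(hD)+V_0$ on $\mathbb R^n$. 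The whole comparison thus reduces to comparing the ground states of $P_0$ and $P_1$, both genuine eigenfunctions regardless of the arithmetic of $h$; this eliminates your obstacle~(iii) without any ``quasimode by hand''.

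For~(i) you overshoot: the paper needs neither the tower $\|\partial^\alpha\psi_0\|\lesssim h^{-|\alpha|/2}$ nor $\mathcal O(h^\infty)$ tails beyond $|\xi|\sim h^{1/2}$. What is actually used is the single quartic bound
\[
\|(hD)^2 u\|^2\le Ch\,\lambda(h)
\]
for the ground state of $P_0$ (and the analogous bound for $P_1$), obtained by applying $hD_{x_k}$ to the eigenvalue equation, pairing with $hD_{x_k}u$, and invoking the elementary inequality $|\nabla V_0|\le C V_0^{1/2}$ valid for any nonnegative $C^2$ function (Lemmas~\ref{l:ground}--\ref{l:groundP1}). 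Together with localization of $u$ to a \emph{fixed} ball $|\xi|<c_1$ modulo $\mathcal O(h^\infty)$ (plain semiclassical ellipticity, Lemma~\ref{l:loc}) and the expansion $\sum_j 2(1-\cos\xi_j)\varphi(\xi)=|\xi|^2\varphi(\xi)+\sum_j\xi_j^4 a_j(\xi)$, this yields $\langle P_1 u,u\rangle\le\lambda(h)(1+Ch)$ in one line, and the reverse inequality is symmetric. Your full-regularity route would also work in principle, but is harder to carry through for degenerate or non-isolated minima and is simply not needed here.
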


\begin{figure}
{\begin{tikzpicture}
\node at (-6,0) {\includegraphics[width=16cm]{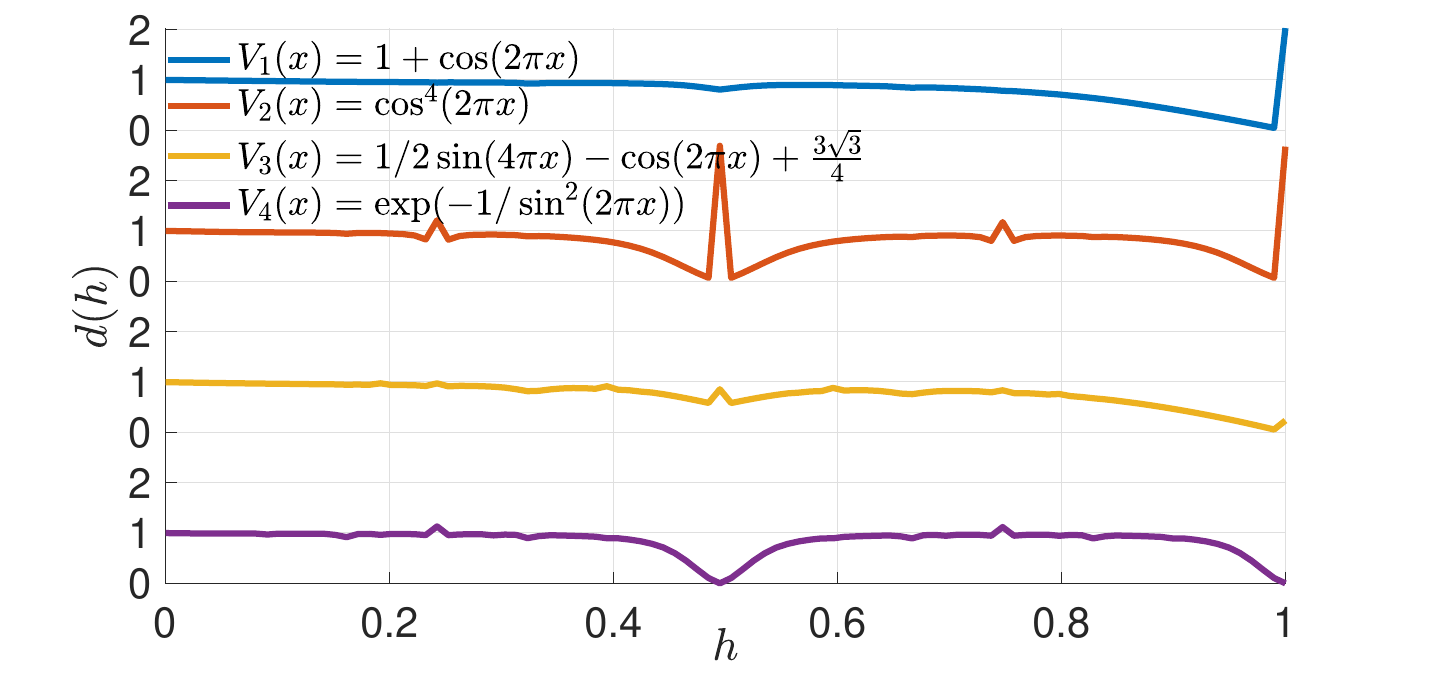}};
  %   \node at (-3.5-9.5,0){$d(h) $};
   %  \node at (-3.5+1+3.5,-3.0){$\hbar $};
\end{tikzpicture}}
\caption{\label{fig:rational}Plots of $ d ( h ) $ in \eqref{eq:comp}
%:= \frac{ \min \Spec ( P_{\rm{d}} ( h ) ) }{ \min \Spec ( P_{\rm{c}} ( h ) ) } $ in \eqref{eq:comp} 
sampled at rational values, $ h \in \mathbb Q$ for four different potentials. 
The results of \cite{DSS} show that  
$ d ( h ) \leq 1 $ for $h \notin \mathbb Q $. The spikes indicate dramatic discontinuities of the spectrum of $P_{\rm{d}} ( h )$ at
rational points -- see \S \ref{s:con} -- where $ d ( h ) > 1$.}
\end{figure}

As a consequence, the bottom of the spectrum of the discrete operator $P_{\rm{d}}(h)$ can be determined by studying the (often more tractable) continuous Schrödinger operator $P_{\rm{c}}(h)$, and vice versa.
The proof of the theorem follows standard semiclassical arguments, similar to, though much simpler than,  those
in Helffer--Sj\"ostrand \cite{HelSj} and Becker--Zworski \cite[\S 5]{BZ}.

\begin{figure}
\begin{centering}
{\begin{tikzpicture}
\node at (-6,0) {\includegraphics[width=16cm]{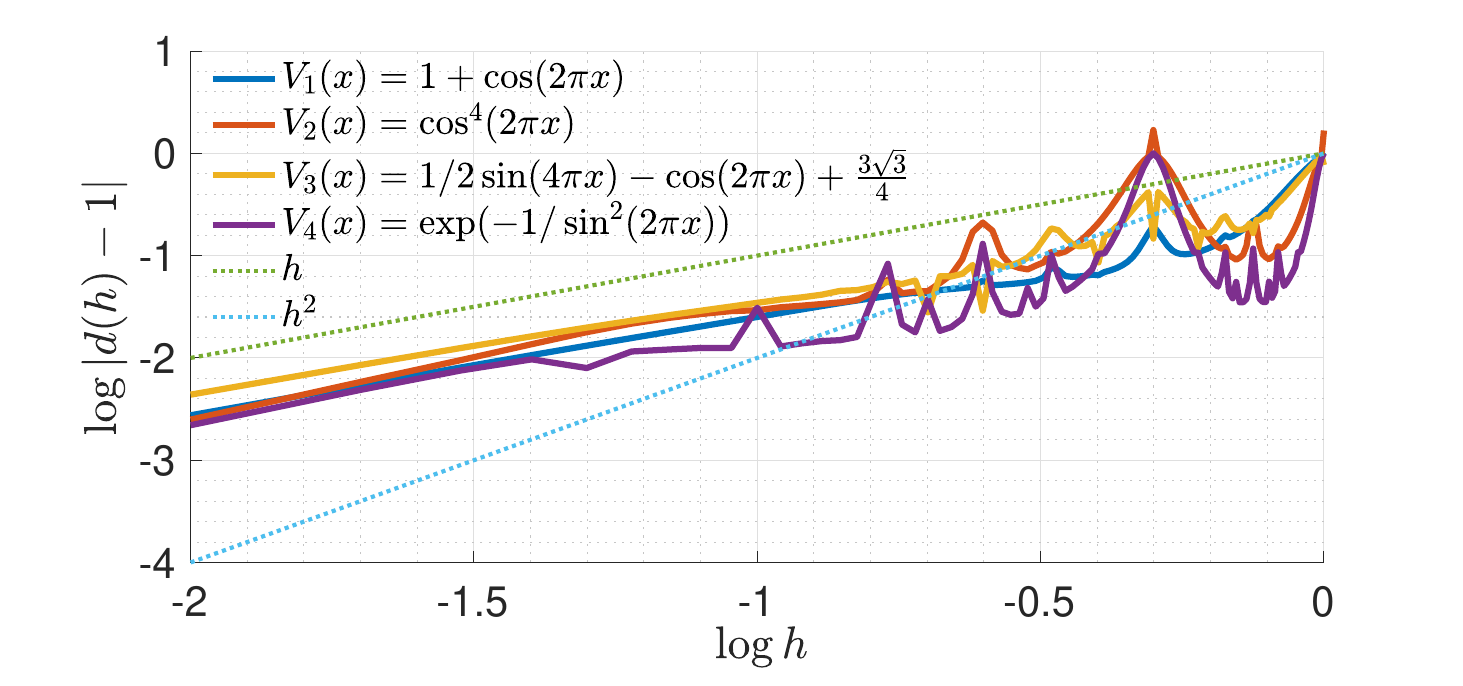}};
%     \node at (-3.5-9.5,0){$\log \vert d(h)-1 \vert$};
%     \node at (-3.5+1+3.8,-3){$\log \vert \hbar\vert $};
\end{tikzpicture}}
\end{centering}
\caption{\label{f:optimal} The $\log\log $ plot corresponding to Figure~\ref{fig:rational} indicating that $ \mathcal O (h ) $ in Theorem \ref{thm:main} 
is optimal.}
\end{figure}

When more structure of $ V $ is known, for instance when $ V $ has a unique non-degenerate minimum in the fundamental domain then one can replace the bound in \eqref{eq:comp} by a full asymptotic expansion -- see \S \ref{s:BS} for a detailed analysis in dimension one. That shows that the error bound in \eqref{eq:comp} is optimal -- see  Figure \ref{f:optimal}. We also remark that the spectrum of $ P_{\rm{c}} $ is absolutely continuous while the
spectrum of $ P_{\rm{d}} $ may have a very complicated structure depending on the rationality properties of $h$. In particular, we can see discontinuities in $ d ( h ) $ -- see \S \ref{s:dis}.

\section{Localization of spectra of periodic operators}\label{sec:2}

Instead of considering the operators given in \eqref{eq:Pd} and \eqref{eq:Pc}, we will consider 
operators unitarily equivalent to $ P_{\rm{d}} $ and $ P_{ \rm{c}} $ respectively: with 
$ \mathbb T^n :=  \mathbb R^n/ ( 2 \pi \mathbb Z)^n$, 
\begin{equation}
\label{eq:defHP} 
\begin{gathered} H := \sum_{ j=1}^n 2(1 - \cos x_j)  + V ( h D_x ) , \ \ x \in \mathbb T^n ,  \ \ \ H : L^2 ( \mathbb T^n ) \to L^2 ( \mathbb T^n ) , 
 \\
P := - h^2 \Delta + V ( x ) , \ \ x \in \mathbb R^n , \ \ \ 
P : H^2 ( \mathbb R^n ) \to L^2 ( \mathbb R^n ) .
 \end{gathered}
\end{equation}
Without loss of generality we can assume that  $ \min V  = 0 $ and that $ \bar F = 
\{ x:  0 \leq x_j  \leq 1 \} $.

Assumption \eqref{eq:defV} implies that we can choose an open subset, $ \Omega \Subset \bar F  $, such that $ V |_{ \bar 
F \setminus \Omega } > c_0 > 0 $. We can also choose an open neighbourhood (in $\mathbb R^n$), 
$ \Gamma $ of $ \partial F $ such that $ V|_{ \Gamma } > c_0 > 0 $. The image of $\Gamma$ under $V$ (or just $\Gamma$) will be called the barrier. This is illustrated in Figure \ref{fig:barrier}.

We will use the following set of localizing functions:
\begin{equation}
\label{eq:defchi} 
\begin{gathered} \chi , \tilde \chi,  \psi  \in C_{\rm{c}}^\infty ( \mathbb R^n; [0,1] ) , \ \ 
\sum_{ \gamma \in \mathbb Z^n } \chi ( \xi - \gamma ) = 1, \ \ \chi|_\Omega = 1, \\ 
\supp \nabla \chi, \, \supp \nabla \tilde \chi, \, \supp \nabla \psi \subset \Gamma, \ \  \widetilde \chi |_{\supp \chi } = 1 , \ \ \ 
\psi|_{ \supp \widetilde \chi } = 1. 
\end{gathered}
\end{equation}
We then define 
\begin{equation}
\label{eq:V0}
V_0 ( x ) := V ( x ) + 1 - \psi ( x ) ,  \ \ \ V_0 ( x ) > \min( c_0, 1 )  ,  \ \  x \notin \Omega 
\end{equation} 
and the corresponding operators
\begin{equation}
\label{eq:defHP0} 
\begin{gathered} H_0 := \sum_{ j=1}^n 2(1 - \cos x_j)  + V_0 ( h D_x ) , \ \ x \in \mathbb T^n ,  \ \ \ H_0 : L^2 ( \mathbb T^n ) \to L^2 ( \mathbb T^n ) , 
 \\
P_0 := - h^2 \Delta + V_0 ( x ) , \ \ x \in \mathbb R^n , \ \ \ 
P_0 : H^2 ( \mathbb R^n ) \to L^2 ( \mathbb R^n ) .
 \end{gathered}
\end{equation}
These operators have discrete spectrum in a neighbourhood of $ 0 $ (see the proof of 
Lemma \ref{l:ground} for that standard fact). 

The key result now is given in 
\begin{lemm}
\label{l:local}
In the notation of \eqref{eq:defHP} and \eqref{eq:defHP0} 
\begin{equation}
\label{eq:H2H0} \begin{split} 
& \Spec( H) \cap [0, c_0 /2 ] \subset \Spec (H_0 ) + (- \varepsilon( h ), \varepsilon ( h )) , \\ 
&  \Spec( P) \cap [0, c_0 /2 ] \subset \Spec (P_0 ) + (- \varepsilon( h ), \varepsilon ( h )) ,  
 \end{split} \end{equation}
 where $\varepsilon(h) = \mathcal O ( h^\infty ) $ .
 \end{lemm}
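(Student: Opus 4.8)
\emph{Plan of proof.} The plan is to prove both inclusions in \eqref{eq:H2H0} by the same partition‑of‑unity parametrix, carried out first for $(P,P_0)$, where it is cleanest, and then transferred to $(H,H_0)$. Fix the cutoffs in \eqref{eq:defchi}, chosen so that $\widetilde\chi\equiv1$ near $\supp\chi$, so that $\supp\chi$ and $\supp\nabla\widetilde\chi$ are disjoint compacta; assume also $c_0<1$, so $V_0>c_0$ on $\mathbb R^n\setminus\Omega$ by \eqref{eq:V0}. For $\gamma\in\mathbb Z^n$ put $\chi_\gamma=\chi(\cdot-\gamma)$, $\widetilde\chi_\gamma=\widetilde\chi(\cdot-\gamma)$, and $P_{0,\gamma}=P+1-\psi(\cdot-\gamma)$; since $V$ is $\mathbb Z^n$‑periodic, $P_{0,\gamma}$ is the $\gamma$‑translate of $P_0$, whence $\Spec(P_{0,\gamma})=\Spec(P_0)$. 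It suffices to prove the contrapositive of the second inclusion: if $z\in[0,c_0/2]$ and $\operatorname{dist}(z,\Spec(P_0))\ge\varepsilon(h)$ for a suitable $\varepsilon(h)=\mathcal O(h^\infty)$, then $z\notin\Spec(P)$. For such $z$ set $R(z)=\sum_\gamma\widetilde\chi_\gamma(P_{0,\gamma}-z)^{-1}\chi_\gamma$; the supports $\supp\chi_\gamma$ and $\supp\widetilde\chi_\gamma$ have bounded overlap, so $R(z)$ is bounded $L^2\to H^2$. Using that $V$ commutes with $\widetilde\chi_\gamma$, that $\widetilde\chi_\gamma(1-\psi(\cdot-\gamma))=0$ (as $\psi|_{\supp\widetilde\chi}=1$), that $\widetilde\chi_\gamma\chi_\gamma=\chi_\gamma$ (as $\widetilde\chi|_{\supp\chi}=1$) and that $\sum_\gamma\chi_\gamma=1$, a one‑line computation gives $(P-z)R(z)=I+E(z)$ with $E(z)=\sum_\gamma[-h^2\Delta,\widetilde\chi_\gamma](P_{0,\gamma}-z)^{-1}\chi_\gamma$; if $\|E(z)\|<1$ then $P-z$ is right‑invertible, hence (selfadjointness) $z\notin\Spec(P)$.

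The only real point is the bound on $E(z)$, a weighted (Agmon) estimate. I would pick $\phi\in C_{\mathrm c}^\infty(\mathbb R^n;[0,\infty))$ vanishing near $\supp\chi$, with $\phi\ge\alpha_0>0$ on $\supp\nabla\widetilde\chi$ and $|\nabla\phi|^2\le c_0/4$ everywhere — possible, for $\alpha_0$ small, since $\supp\chi$ and $\supp\nabla\widetilde\chi$ lie at positive distance — and set $\phi_\gamma=\phi(\cdot-\gamma)$. Then $\{\phi_\gamma>0\}\subset\mathbb R^n\setminus(\Omega+\gamma)$, where $V_{0,\gamma}>c_0$, so $V_{0,\gamma}-z-|\nabla\phi_\gamma|^2\ge c_0/4>0$ there for all $z\le c_0/2$, while $\nabla\phi_\gamma=0$ where $\phi_\gamma=0$. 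Feeding $g=(P_{0,\gamma}-z)^{-1}\chi_\gamma f$ into the identity
\[
\operatorname{Re}\langle e^{\phi_\gamma/h}(P_{0,\gamma}-z)g,\,e^{\phi_\gamma/h}g\rangle=\|h\nabla(e^{\phi_\gamma/h}g)\|^2+\langle(V_{0,\gamma}-z-|\nabla\phi_\gamma|^2)e^{\phi_\gamma/h}g,\,e^{\phi_\gamma/h}g\rangle,
\]
using $\phi_\gamma\equiv0$ on $\supp\chi_\gamma$ together with the a priori bound $\|g\|\le\varepsilon(h)^{-1}\|f\|$, one gets $\|e^{\phi_\gamma/h}g\|+\|h\nabla(e^{\phi_\gamma/h}g)\|\lesssim\varepsilon(h)^{-1}\|f\|$. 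Since $[-h^2\Delta,\widetilde\chi_\gamma]=-h^2(\Delta\widetilde\chi_\gamma)-2h(\nabla\widetilde\chi_\gamma)\cdot(h\nabla)$ is supported in $\{\phi_\gamma\ge\alpha_0\}$, this yields $\|[-h^2\Delta,\widetilde\chi_\gamma]g\|\lesssim h\,e^{-\alpha_0/h}\varepsilon(h)^{-1}\|f\|$, uniformly in $\gamma$ and in $z\in[0,c_0/2]$; as the summands of $E(z)$ output into the $\supp\nabla\widetilde\chi_\gamma$ and take input only from the $\supp\chi_\gamma$ — both of bounded overlap — a Cotlar–Stein argument gives $\|E(z)\|\lesssim h\,e^{-\alpha_0/h}\varepsilon(h)^{-1}$. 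Taking $\varepsilon(h)=e^{-\alpha_0/(2h)}$ makes $\|E(z)\|<1$ for small $h$, proving the second inclusion with $\varepsilon(h)=\mathcal O(h^\infty)$ (in fact exponentially small).

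For the first inclusion I would run the same argument for $(H,H_0)$ in the Fourier picture on $\ell^2(\mathbb Z^n)$: the partition of unity is now in $hD_x$, the commutator $[-h^2\Delta,\widetilde\chi_\gamma]$ becomes $[\sum_j2(1-\cos x_j),\widetilde\chi(hD_x-\gamma)]$, which is $\mathcal O(h)$ by the standard estimate for the commutator of a bounded multiplication operator with a function of $hD_x$, and the weighted resolvent bound is replaced by the corresponding Combes–Thomas estimate for the difference operator. The step I expect to be the main obstacle, and which is genuinely new here, is that the single‑well operators $H_{0,\gamma}$ are \emph{not} unitarily equivalent to $H_0$: moving the well $\Omega$ to $\Omega+\gamma$ shifts it by $\gamma/h\notin\mathbb Z^n$ relative to the lattice $h\mathbb Z^n$ on which $V$ is sampled, so $\Spec(H_{0,\gamma})\neq\Spec(H_0)$ in general. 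What is needed is that the low‑lying eigenvalues of $H_{0,\gamma}$ differ from those of $H_0$ by only $\mathcal O(h^\infty)$, uniformly in $\gamma$ — equivalently, that the lowest eigenvalue $\lambda(\theta)$ of the single‑well difference operator with lattice offset $\theta\in\mathbb R^n/h\mathbb Z^n$ varies by $\mathcal O(h^\infty)$ in $\theta$. This is exactly a ``discrete vs.\ continuous'' effect: the relevant eigenstates occupy $\sim h^{-1/2}$ lattice sites, hence are smooth on that scale, so Poisson summation — applied via the Feynman–Hellmann formula to $\lambda'(\theta)=\sum_n(\nabla V)(hn+\theta)|\psi_\theta(n)|^2$, whose integrand has relevant frequencies $\mathcal O(h^{1/2})$ — shows that $\lambda'(\theta)$ equals a $\theta$‑independent quantity plus $\mathcal O(h^\infty)$; the $h$‑periodicity of $\lambda$ in $\theta$ then forces that quantity to vanish, so $|\lambda(\theta)-\lambda(0)|=\mathcal O(h^\infty)$ (the $C^\infty$, rather than analytic, smoothness of $V$ being what limits this to $\mathcal O(h^\infty)$). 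With this in hand the parametrix for $(H,H_0)$ closes as before, the a priori resolvent bound now using $\operatorname{dist}(z,\Spec(H_{0,\gamma}))\ge\varepsilon(h)-\mathcal O(h^\infty)$ for $z\in[0,c_0/2]$.
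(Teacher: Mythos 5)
Your parametrix has exactly the structure of the paper's: a sum over lattice translates of $\widetilde\chi_\gamma$ (resolvent of single-well operator) $\chi_\gamma$, where the error is the sum of localized commutator terms. The paper treats the $(H,H_0)$ case in detail, with the partition of unity living in $hD_x$ on $L^2(\mathbb T^n)$, estimates the commutator terms via the semiclassical symbol calculus, and sums them with Cotlar--Stein, obtaining $\varepsilon(h)=\mathcal O(h^\infty)$; the $(P,P_0)$ case is then dismissed as ``more standard.'' You reverse the order and, for $(P,P_0)$, replace the pseudodifferential machinery by a clean Agmon (exponential-weight) estimate, which is correct and in fact sharper --- it gives $\varepsilon(h)=\mathcal O(e^{-c/h})$ rather than merely $\mathcal O(h^\infty)$. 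That part of your argument is a genuine alternative and is tighter than what the paper records for the continuous inclusion.

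More importantly, you have put your finger on a real subtlety in the discrete inclusion that the paper's write-up glosses over. The paper sets $r_\gamma u(x)=e^{i\gamma x/h}u(x)$, asserts $r_\gamma:L^2(\mathbb T^n)\to L^2(\mathbb T^n)$, and uses ``unitary equivalence'' to conclude $\|(Q_0^\gamma)^{-1}\|=d(z,\Spec H_0)^{-1}$. But for $\gamma\in\mathbb Z^n$ and generic $h$ the function $e^{i\gamma x/h}u$ is not $2\pi$-periodic: $r_\gamma$ shifts the Floquet parameter by $\gamma/h\bmod\mathbb Z^n$, so $Q_0^\gamma=r_\gamma Q_0 r_{-\gamma}$ (which is still a well-defined operator on $L^2(\mathbb T^n)$, with symbol $V_0(\xi-\gamma)$) is unitarily equivalent to $Q_0$ acting on the Floquet space $L^2_{-\gamma/h}$, \emph{not} to $Q_0$ on $L^2(\mathbb T^n)$. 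In Fourier, the well of $\widehat H_0^\gamma$ sits at $k=\gamma/h\notin\mathbb Z^n$, so indeed $\Spec(H_0^\gamma)\neq\Spec(H_0)$ in general. You are correct that the parametrix argument therefore needs the supplementary fact that the low-lying spectra of the offset single-well operators differ from that of $H_0$ by only $\mathcal O(h^\infty)$, uniformly in $\gamma$.

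Where your proposal is still incomplete is the repair itself. The Poisson-summation/Feynman--Hellmann sketch is the right phenomenon but does not close as stated: replacing $\sum_n(\nabla V_0)(hn+\theta)|\psi_\theta(n)|^2$ by its continuum integral via Poisson is fine (smoothness of $\psi_\theta$ at scale $h^{-1/2}$ plus compact $\xi$-localization does kill the nonzero Poisson modes), but the resulting integral $\int(\nabla V_0)(hn+\theta)|\psi_\theta(n)|^2\,dn$ is not visibly $\theta$-independent, because $\psi_\theta$ itself depends on $\theta$ and there is no lattice translation by $\theta/h$ to remove it; so ``$h$-periodicity forces the constant to vanish'' does not yet follow. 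A cleaner way to make your point rigorous is the route the paper takes in the \emph{next} section: show that for each fixed $\theta$, the bottom of $\Spec(H_0|_{L^2_\theta})$ agrees with the bottom of the continuous proxy $P_1=A(hD)+V_0$ (cf.~\eqref{eq:P1}) up to $\mathcal O(h^\infty)$ (localization of the ground state plus comparison of quasimodes across the semiclassical Fourier transform), which immediately gives the uniform-in-$\theta$ statement you need. Alternatively, one can simply state the parametrix conclusion with $\bigcup_\theta\Spec(H_0|_{L^2_\theta})$ in place of $\Spec(H_0)$ and prove the $\mathcal O(h^\infty)$-closeness of that union to $\Spec(H_0)$ as a separate lemma. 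Either way, you have identified the key missing ingredient; the fix just needs to be pinned down more carefully than the Feynman--Hellmann heuristic allows.
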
 
\begin{proof} We will prove the first assertion in \eqref{eq:H2H0}. The second one can be proved similarly.
It is more standard and also follows from Floquet theory and the existence of a barrier. We will use
semiclassical pseudodifferential calculus on $ \mathbb T^n $ -- see \cite[\S 5.3.1]{z12} for a presentation which allows us to cite results from \cite[Chapter 4]{z12}.

\begin{figure}
\begin{centering}
\includegraphics[width=0.95\textwidth,trim={0.7cm 0.4cm 0.35cm 0.5cm},clip]{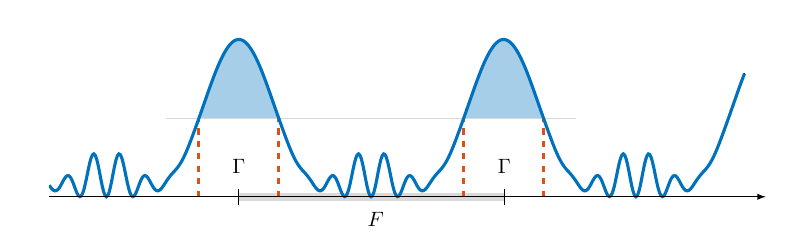}
\end{centering}
\caption{Graph of a potential $V$ with a barrier, over a fundamental domain $F\subset \RR$ (indicated in grey), showing a neighbourhood $\Gamma$ of the boundary $\partial F$ such that $V|_{\Gamma}>\min V$. The barrier $V(\Gamma)$ is shaded blue. \label{fig:barrier}}
\end{figure}

Let $z\in\CC $ and set
$$
Q:=H -z,\quad Q_0:= H_0 -z.
$$
Let $r_\gamma:L^2(\TT)\to L^2(\TT)$ be the multiplication operator $r_\gamma u(x)=e^{i \gamma x/h}u(x)$
and $ \bullet_\gamma ( \xi ) = \bullet ( \xi - \gamma ) $, $ \bullet = \chi, \tilde \chi, \psi $ (from \eqref{eq:defchi}). 
Then 
 \begin{equation}
 \label{eq:rgamma}  r_\gamma Q = Q r_\gamma ,    \ \ \  r_\gamma \bullet ( hD ) = \bullet_\gamma( h D ) r _\gamma, \ \ 
 \bullet= \chi, \tilde \chi, \psi. \end{equation}
As a candidate for an approximate inverse of $ Q $ we introduce
$$
F:=\sum_{\gamma \in\ZZ^n}\tilde\chi_\gamma (hD)(Q_0^\gamma)^{-1}\chi_\gamma (hD),\quad Q_0^\gamma:=r_\gamma Q_0r_{-\gamma}.
$$
Since $\lVert (Q_0^\gamma)^{-1}\rVert_{L^2\to L^2}=d(z,\Spec(H_0))^{-1}$ we have 
\begin{equation*}%\label{eq:F}
F=\mathcal O(d(z,\Spec(H_0))^{-1}).
\end{equation*}
From \eqref{eq:rgamma}, we see that 
$$
Q\tilde\chi_\gamma (hD)=r_\gamma(Q_0-(1-\psi (hD)))r_{-\gamma}\tilde\chi_\gamma (hD)=(Q_0^\gamma-(1-\psi_\gamma (hD))\tilde\chi_\gamma (hD).
$$
The assumptions that $ \psi =1 $ on the support of $ \tilde \chi $, and
$ \tilde \chi = 1 $ on the support of $ \chi $, and $ \sum_\gamma \chi_\gamma = 1 $ (see \eqref{eq:defchi}) give
\begin{equation*}
\begin{aligned}
QF&=I +\sum_{\gamma\in\ZZ^n}[Q_0^\gamma,\tilde\chi_\gamma (hD)](Q_0^\gamma)^{-1}\chi_\gamma (hD).
\end{aligned}
\end{equation*}
We now modify $Q_0$ to make it invertible without changing it on the support of $\nabla\tilde\chi$. To this end, let $ \varphi\in C_{\rm{c}}^\infty(\RR^n; [0,1])$ satisfy (in the notation of \eqref{eq:defchi})
\[   \supp \varphi \subset \Gamma, \ \ \ \varphi|_{ \supp \nabla \tilde \chi } = 1. \]
We then put 
$$
H_1 = H_0 + 1-\varphi (hD),\quad Q_1=H_1 -z.
$$
Since all the terms in the symbol of $H_1$ (also denoted $H_1$ for simplicity), are non-negative, and $ 2 - \psi (\xi ) - \varphi ( \xi ) \geq 2 $ for $ \xi \notin \Gamma$
and $ V |_\Gamma > c_0 $, the symbol of $ H_1 $ satisfies
$$
H_1(x,\xi)=2 \sum_{j=1}^n (1-\cos(x_j))+V(\xi)+1-\psi(\xi)+1- \varphi(\xi)\ge c>0.
$$
Hence, \cite[Theorem 4.29]{z12} gives the existence of $ Q_1^{-1} $ (with a bound independent of 
$h $) for $ z < c_1 $, $ c_1> 0 $ and $ 0 < h < h_0 $. 
Writing $Q_1^\gamma=r_\gamma Q_1r_{-\gamma}$ we have
$$
(Q_0^\gamma)^{-1}=(Q_1^\gamma)^{-1}-(Q_1^\gamma)^{-1}(Q_0^\gamma-Q_1^\gamma)(Q_0^\gamma)^{-1}
$$
which gives
\begin{equation*}
\begin{aligned}
QF&=I+\sum_{\gamma\in\ZZ^n}[Q_0^\gamma,\tilde\chi_\gamma (hD)](Q_1^\gamma)^{-1}\chi_\gamma (hD)
\\&\quad
-\sum_{\gamma\in\ZZ^n}[Q_0^\gamma,\tilde\chi_\gamma (hD)](Q_1^\gamma)^{-1}(Q_0^\gamma-Q_1^\gamma)(Q_0^\gamma)^{-1}\chi_\gamma (hD).
\end{aligned}
\end{equation*}
Since the symbol of $[Q_0^\gamma,\tilde\chi_\gamma (hD)]$ has support contained in $\supp\nabla\tilde\chi_\gamma$ and $Q_0^\gamma=Q_1^\gamma$ there, the second sum is $\mathcal O(h^\infty d(z,\Spec(H_0 ))^{-1})$ since $\sum_{\gamma} \chi_\gamma =1$. Hence,
$$
QF=I+\sum_{\gamma\in\ZZ^n}A_\gamma+\mathcal O(h^\infty d(z,\Spec(H_0))^{-1})_{L^2 \to L^2} ,
$$
where
\begin{gather*}
A_\gamma=[Q_0^\gamma,\tilde\chi_\gamma (hD)](Q_1^\gamma)^{-1}\chi_\gamma (hD),\quad Q_j^\gamma=r_\gamma Q_jr_{-\gamma}.
\end{gather*}
To bound the sum of $ A_\gamma $ we will use the Cotlar--Stein Lemma, see \cite[Theorem C.5]{z12}
and for that we need to estimate the norms of $ A_\gamma A_\rho^* $ and $ A_\gamma^* A_\rho $.

By construction, $\supp\chi_\gamma\cap\supp\chi_\rho=\emptyset$ unless $\rho=\gamma+\sum_{j=1}^n a_je_j$ with $a_j\in\{-1,0,1\}$. In particular, the supports are disjoint if $|\gamma-\rho|>\sqrt{n}$
and hence  $\chi_\gamma ( h D )  \chi_\rho ( h D) =0$ if $|\gamma-\rho|>\sqrt{n}$. Since 
$$
A_\gamma A_\rho^*=[Q_0^\gamma,\tilde\chi_\gamma (hD)](Q_1^\gamma)^{-1}\chi_\gamma (hD)\chi_\rho (hD)(Q_1^\rho)^{-1}[Q_0^\rho,\tilde\chi_\rho (hD)]^*
$$
(if $ z \in \mathbb R $, $ Q_1^\rho $ is selfadjoint and to keep the notation simple we make that 
assumption)
this means that $A_\gamma A_\rho^*=0$ if $|\gamma-\rho|>{\sqrt{n}}$. For $|\gamma-\rho|\le {\sqrt{n}}$ we note that modulo terms of 
size $ O (h^\infty ) $, the symbol of $[Q_0^\gamma,\tilde\chi_\gamma (hD)]$ has support contained in $\supp\nabla\tilde\chi_\gamma$ where $\chi_\gamma\equiv0$ -- 
see \cite[Theorem 4.25]{z12}. 
Since $Q_1$ is invertible we find for fixed $\gamma $ that
$$
\sum_{\rho \in\ZZ^n}\lVert A_\gamma A_\rho^*\rVert^{1/2}=\sum_{ {|\rho-\gamma|\le\sqrt{n}}} \lVert A_\gamma A_\rho^*\rVert^{1/2}=\mathcal O(h^\infty),
$$
and the bound is uniform in $ \gamma $.
This gives $\sup_\gamma\sum_{\rho \in\ZZ^n}\lVert A_\gamma A_\rho^*\rVert^{1/2}=\mathcal O(h^\infty)$.

Next, we consider
$$
A_\gamma^*A_\rho=\chi_\gamma (hD) (Q_1^\gamma)^{-1} [Q_0^\gamma,\tilde\chi_\gamma (hD)]^*[Q_0^\rho,\tilde\chi_\rho (hD)](Q_1^\rho)^{-1}\chi_\rho (hD).
$$
We will show that 
\begin{equation}
\label{eq:4CS}
\lVert A_\gamma^*A_\rho\rVert^{1/2}=\mathcal O(\langle \gamma -\rho \rangle^{-\infty}h^\infty),
\end{equation}
which implies  that $\sup_\gamma\sum_{\rho \in\ZZ^n}\lVert A_\gamma^*A_\rho\rVert^{1/2} = \mathcal O(h^{\infty})$. 

To see \eqref{eq:4CS}, we first consider the case of $ | \gamma - \rho | \leq  {\sqrt{n}} $. Then, 
as in the analysis of $ A_\gamma A_\rho^* $, 
$ [Q_0^\rho,\tilde\chi_\rho (hD)](Q_1^\rho)^{-1}\chi_\rho (hD) = \mathcal O ( h^\infty )_{ L^2 \to L^2 } $, 
uniformly in $\rho $, giving \eqref{eq:4CS}. When $ | \gamma - \rho | >  {\sqrt{n}} $,  we note that 
$ \chi_\gamma = \mathcal O ( m_\gamma^{-N} ) $ for any $N$, where 
$ m_\gamma $ is an order function (in the sense of \cite[\S 4.4]{z12}) given by 
$ m_\gamma ( x, \xi ) := ( 1 + | \xi - \gamma |^2 )^{\frac12} $. Composition formula for 
pseudodifferential operators \cite[Theorem 4.18]{z12}
then give $ A_\gamma A_\rho^* = B_{\rho \gamma} ( x, hD , h) $, 
$ B_{\rho \gamma } \in S ( \mathcal O ( h^\infty )  m_\gamma^{-N} m_\rho^{-N} ) $.
Since $ \sup_{\xi \in \mathbb R^n } m_\gamma^{-N} m_\rho^{-N} \leq  {2^{N/2}} \langle \rho - \gamma \rangle^{-N} $ (Peetre's inequality), \eqref{eq:4CS} follows.

Hence the assumptions of the the Cotlar--Stein Lemma \cite[Theorem C.5]{z12} are satisfied 
and we conclude that 
$$
QF=1+R,\quad \| R\|_{L^2 \to L^2} \leq  \tfrac12 \varepsilon ( h ) d(z,\Spec(H_0 ))^{-1}\ \ \ 
\varepsilon ( h ) = \mathcal O ( h^\infty ) .
$$
This shows that 
$$
d ( z, \Spec ( H_0 )) \geq \varepsilon (h ) \ \Longrightarrow \ 
(H-z)^{-1}=Q^{-1} = F(1+R)^{-1} , 
$$
that is $ z \notin \Spec ( H) $, proving the first claim in \eqref{eq:H2H0}.
\end{proof}

\section{Proof of the comparison result}

We will now study the ground states of $ H_0 $ and $ P_0 $. They are localized to $ x = 0 $, $\xi\in\Omega$, and $ \xi = 0 $, $x\in \Omega $, respectively:

\begin{lemm}
\label{l:loc}
Suppose that $ \chi \in S(\mathbb R^{2n},  1 )  $ (see \cite[\S 4.4]{z12}),  $\chi \geq  0 $,
$ \supp \chi \subset \mathbb R^n \times B_{\mathbb R^n }  ( 0, 1 ) $, 
  and for some 
 $ c_j > 0 $, $ j=0,1 $, 
\[  V_0 ( x ) <  c_0 \ \text{and} \    |\xi| <  c_1 \ \Longrightarrow \ \chi ( x, \xi ) = 1 . \]
If  
\[  (  H_0 - \lambda ) u_1 = u_0 , \ \ \ ( P_0 - \lambda ) w_1 = w_0 , \ \ \lambda = \mathcal O ( h ) , \]
then, with $ R ( x, \xi ) = ( \xi, x ) $, 
\begin{equation}
\label{eq:loc} 
\begin{split}
u_1 & = ( R^*\chi)^{\rm{w}} ( x, h D ) u_1 + \mathcal O ( \| u_1 \|  h^\infty )_{L^2}  + \mathcal O ( \| u_0 \| )_{L^2 } , \ \ L^2 = L^2 ( \mathbb T^n ) ,\\
   w_1 & = \chi^{\rm{w}} ( x, h D ) w_1 + \mathcal O ( \| w_1 \|  h^\infty )_\mathscr S  + \mathcal O ( \| w_0 \| )_{L^2 } , \ \ L^2 = L^2 ( \mathbb R^n ) ,  
\end{split}
\end{equation}
where we identified $ B_{\mathbb R^n } ( 0 , 1 ) $ with a subset of $ \mathbb T^n := \mathbb R^n / ( 
2 \pi \mathbb Z )^n $. 
\end{lemm}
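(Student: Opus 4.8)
The plan is to prove both identities by the standard semiclassical elliptic microlocalization. The point is that where the relevant cutoff fails to equal $1$ the operator $H_0-\lambda$ (resp.\ $P_0-\lambda$) is elliptic — because $\lambda=\mathcal O(h)$ is small — so it has a microlocal left parametrix there, which forces $u_1$ (resp.\ $w_1$) onto $\supp(R^*\chi)$ (resp.\ $\supp\chi$) modulo the advertised errors. Concretely, for $H_0$ I would write its (bounded) symbol as $a(x,\xi)=\sum_{j=1}^n 2(1-\cos x_j)+V_0(\xi)$ and note that on $\supp(1-R^*\chi)$ — where the hypothesis on $\chi$ gives $|x|\ge c_1$ or $V_0(\xi)\ge c_0$ — one has $a\ge c>0$: indeed $\sum_j 2(1-\cos x_j)$ vanishes on $\TT^n$ only at $x=0$, and $V_0(\xi)\ge\min(c_0,1)>0$ for $\xi\notin\Omega$ by \eqref{eq:V0}, so $a-\lambda\ge c/2$ there for $h$ small. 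Then I would pick $\Phi\in S(1)$, periodic in $x$, with $\Phi=1$ on a neighbourhood of $\supp(1-R^*\chi)$ and $\supp\Phi\subset\{a>c/4\}$, and use the calculus of \cite[Chapter~4]{z12} to build $B\in S(1)$ with $B^{\rm{w}}(H_0-\lambda)=\Phi^{\rm{w}}+\mathcal O(h^\infty)_{L^2\to L^2}$ and $\|B^{\rm{w}}\|_{L^2\to L^2}=\mathcal O(1)$. Applying this to $u_1$ and using $(H_0-\lambda)u_1=u_0$ gives $\Phi^{\rm{w}}u_1=B^{\rm{w}}u_0+\mathcal O(h^\infty\|u_1\|)=\mathcal O(\|u_0\|)+\mathcal O(h^\infty\|u_1\|)$; since $1-R^*\chi$ and $1-\Phi$ have disjoint supports, $(I-(R^*\chi)^{\rm{w}})(I-\Phi^{\rm{w}})=\mathcal O(h^\infty)_{L^2\to L^2}$, and combining these two facts gives $(I-(R^*\chi)^{\rm{w}})u_1=(I-(R^*\chi)^{\rm{w}})\Phi^{\rm{w}}u_1+\mathcal O(h^\infty\|u_1\|)$, i.e.\ the first line of \eqref{eq:loc}.

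For $P_0$ the argument is identical with $x$ and $\xi$ interchanged: from $|\xi|\ge c_1$ or $V_0(x)\ge c_0$ on $\supp(1-\chi)$ and $V_0\ge 0$ one gets $p(x,\xi)=|\xi|^2+V_0(x)$ with $p-\lambda\ge\langle\xi\rangle^2/C$ there, so $P_0-\lambda$ is semiclassically elliptic in $S(\langle\xi\rangle^2)$; the parametrix $B$ then lies in $S(\langle\xi\rangle^{-2})$ (with remainder $\mathcal O(h^\infty)$ in $S(\langle\xi\rangle^{-N})$ for every $N$), and the rest goes through verbatim to give $w_1=\chi^{\rm{w}}w_1+\mathcal O(h^\infty\|w_1\|)+\mathcal O(\|w_0\|)_{L^2}$.

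The step I expect to take the most care is promoting the $\mathcal O(h^\infty\|w_1\|)$ error for $P_0$ to the Schwartz bound $\mathcal O(h^\infty\|w_1\|)_{\mathscr S}$ claimed in \eqref{eq:loc}, since $V_0$ is bounded, not confining, so the bare parametrix yields no decay in $x$. To handle this I would use \eqref{eq:V0} to note that $\{V_0<\min(c_0,1)\}\subset\Omega\Subset\bar F$ is bounded, and choose $\Phi$ with $1-\Phi\in C_{\rm{c}}^\infty(\RR^{2n})$ supported in (a slightly shrunk copy of) $\{V_0<\min(c_0,1)\}\times\{|\xi|<c_1\}$, a relatively compact subset of $\{\chi\equiv1\}$, while keeping $\supp\Phi\subset\{p>c/4\}$. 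Then $(I-\Phi^{\rm{w}})$ has a Schwartz kernel, the disjoint-support composition $(I-\chi^{\rm{w}})(I-\Phi^{\rm{w}})$ is $\mathcal O(h^\infty)$ as a map $\mathscr S'\to\mathscr S$, and the leftover $(I-\chi^{\rm{w}})\Phi^{\rm{w}}w_1$ decomposes, via the parametrix, into a term $\mathcal O(\|w_0\|)_{L^2}$ and a term of size $\mathcal O(h^\infty\|w_1\|)$; collecting everything yields the second line of \eqref{eq:loc}. On $\TT^n$ there is no $x$-variable at infinity to exploit, the parametrix remainder is only $\mathcal O(h^\infty)$ in $S(1)$, and the plain $L^2$ error recorded in the first line is what the method delivers directly.
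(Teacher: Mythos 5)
Your proof is correct in spirit, but it proceeds by a different route than the paper. You build a microlocal parametrix $B$ for $H_0-\lambda$ supported on the elliptic region (via a cutoff $\Phi$ that equals $1$ near $\supp(1-R^*\chi)$ and is supported where the symbol $a-\lambda$ is bounded below), obtain $\Phi^{\rm w}u_1=B^{\rm w}u_0+\mathcal O(h^\infty\|u_1\|)$, and then use the disjoint-support estimate $(I-(R^*\chi)^{\rm w})(I-\Phi^{\rm w})=\mathcal O(h^\infty)$. The paper instead avoids the parametrix step entirely: it adds $(R^*\chi)^{\rm w}$ to $H_0-\lambda$ to make the resulting symbol \emph{globally} bounded below, so $(H_0+(R^*\chi)^{\rm w}-\lambda)^{-1}$ exists as an exact bounded inverse with symbol in $S(1)$. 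One then writes $u_1=u_2+u_3$ with $u_2=(H_0+(R^*\chi)^{\rm w}-\lambda)^{-1}(R^*\chi)^{\rm w}u_1$ and $u_3=(H_0+(R^*\chi)^{\rm w}-\lambda)^{-1}u_0$, and uses that a wider cutoff $\widetilde\chi$ with $\widetilde\chi=1$ on $\supp\chi$ satisfies $(1-(R^*\widetilde\chi)^{\rm w})u_2=\mathcal O(h^\infty\|u_1\|)$. The two mechanisms are closely related — both rest on ellipticity of the symbol off $\{\chi=1\}$ — but the paper's exact-inverse trick is slightly slicker, while yours is the more textbook iterative construction; a small artifact of the paper's route is that the conclusion is stated for $\widetilde\chi$ rather than $\chi$ itself, though this costs nothing for the application.

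Two small remarks. First, you discuss the Schwartz-class error for $P_0$ in more explicit detail than the paper (which simply says ``the argument is the same''); your reduction to the boundedness of $\{V_0<\min(c_0,1)\}\subset\Omega$ and the compactly supported $1-\Phi$ is the right idea. Second, note however that after the decomposition $(I-\chi^{\rm w})w_1=(I-\chi^{\rm w})(I-\Phi^{\rm w})w_1+(I-\chi^{\rm w})\Phi^{\rm w}w_1$, the parametrix remainder applied to $w_1$ produces a term that you only control as $\mathcal O(h^\infty\|w_1\|)_{L^2}$, whereas the lemma's first error term is asserted in $\mathscr S$. To get the full $\mathscr S$ bound one should also arrange (or argue a posteriori) that the parametrix remainder $R=\Phi^{\rm w}-B^{\rm w}(P_0-\lambda)$ has symbol rapidly decreasing in $x$ — e.g.\ by splitting $\Phi$ into a compactly supported piece near the ``boundary'' of $\{\chi=1\}$ and a piece equal to $1$ near spatial infinity, where $P_0-\lambda$ itself is an exact inverse up to compactly supported corrections. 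The paper's exact-inverse formulation has an analogous implicit step; it is worth flagging, though it does not invalidate the argument, and in the applications the cutoff $\chi$ is taken in $C_{\rm c}^\infty(\mathbb R^{2n})$, in which case the issue disappears.
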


\begin{proof}
Let us consider the case of $ H_0 $: since $ 2 \sum_{j=1}^n ( 1 - \cos x_j ) + V_0 ( \xi ) + R^* \chi (x, \xi ) 
> c_2 > 0  $, we see that $ H_0 + ( R^* \chi)^{\rm{w}}  ( x, h D) - \lambda $ is invertible for $ h $ small enough,
with bounds independent of $ h$ (see \cite[Theorem 4.29]{z12}). Writing
 $ \chi^{\rm{w}} := \chi^{\rm{w}} ( x, h D ) $ we then have $ u_1 = u_2 + u_3$, where
\begin{equation}
\label{eq:u02u1}  \begin{split}  
&  u_2 = ( H_0 + ( R^* \chi)^{\rm{w}}  - \lambda)^{-1}  ( R^* \chi)^{\rm{w}} u_1 , \ \ u_3 = ( H_0 + ( R^* \chi)^{\rm{w}}  - \lambda)^{-1} u_0   
 . \end{split} \end{equation}
 If $ \widetilde \chi $ has the same properties as $ \chi $ and $ \widetilde \chi = 1$
 on $ \supp \chi $, then the composition formula \cite[Theorem 4.18]{z12} shows that 
  \[  ( 1 - ( R^* \widetilde \chi)^{\rm{w}})( H_0 + ( R^* \chi)^{\rm{w}}   - \lambda)^{-1} ( R^* \chi)^{\rm{w}}  = \mathcal O (h^\infty)_{ L^2 \to L^2 } , \]
 that is, in the notation of \eqref{eq:u02u1}, $ ( R^* \widetilde \chi)^{\rm{w}}  u_2 = u_2 + \mathcal O (h^\infty  \| u_1\|)_{L^2} $. 
 Hence,
 \[ \begin{split} ( R^* \widetilde \chi)^{\rm{w}} u_1& = ( R^* \widetilde \chi)^{\rm{w}} u_2 + 
 ( R^* \widetilde \chi)^{\rm{w}} u_3  = 
 u_2 + \mathcal O ( h^\infty \| u_1 \|)_{L^2} + \mathcal O ( \| u_0 \| )_{L^2} \\
 & = 
 u_1 + \mathcal O ( h^\infty \| u_1 \|)_{L^2} + \mathcal O ( \| u_0 \| )_{L^2} . \end{split} \]
This gives the first statement in \eqref{eq:loc} with $ \chi $ replaced by  $ \widetilde \chi $. 
The argument for $ P_0 $ is the same.
\end{proof}

To compare the 
spectra it is convenient to make another modification and consider an operator on $ \mathbb R^n$ 
whose ground state is within $ \mathcal O ( h^\infty ) $ of $ H_0 $. We define it as follows:
let 
\begin{equation}\label{eq:A}
\begin{gathered}  A ( \xi ) := 2 \sum_{j=1}^n ( 2 - \cos \xi_j  - \psi_0 ( \xi_j ) ) , \\ \psi_0 \in 
C_{\rm{c}}^\infty ( (-1,1)  ; [ 0 , 1 ] ) , \ \
\psi_0|_{ (-\frac12, \frac12) } = 1, \ \ \psi_0 ( - t ) = \psi_0 ( t ) . \end{gathered}
\end{equation}
We then put
\begin{equation}
\label{eq:P1}   P_1 := A ( h D_x ) + V_0 ( x ) . \end{equation}
If $ w $ is the ground state of $ H_0 $ then Lemma \ref{l:loc} shows that it is localized and hence, 
after taking the semiclassical Fourier transform, produces
a quasimode for $ P_1$ (i.e., a solution to $ ( P_1 - \lambda ) u = \mathcal O ( h^\infty)_{L^2}  $,  
$ \|u \| = 1 $).  Similarly, a ground state of $ P_1$ produces
a quasimode for $ H_0 $
(using an appropriate periodization argument, cf.~the proof of \cite[Corollary 1.4]{BW}).
Hence, 
\[ \min \Spec ( H_0 ) = \min \Spec ( P_1 ) + \mathcal O ( h^\infty ) . \]

To obtain a comparison with $ \min \Spec (P_0)  $ we record the following lemma:
\begin{lemm}
  \label{l:ground}
  Suppose that $ V_0 \in C^\infty ( \mathbb R^n ) $, $ \min V_0 = 0 $,
  $ V_0 ( x ) > c_0 > 0 $ for $  |x| \geq R $.  Then, there exists
  $ C_0 $ and $ h_0 $ such that for $ 0 < h < h_0 $, 
  \begin{equation*}
    %\label{eq:pp}
    \Spec ( - h^2 \Delta + V_0  )  \cap [ 0 , C_0 h ]  =
  \Spec_{\rm{pp}} ( - h^2 \Delta + V_0  )  \cap [ 0 , C_0 h ] . \end{equation*}
  If $ \lambda (h) := \min \Spec_{\rm{pp}} ( - h^2 \Delta + V_0 ) $ and
  $ ( - h^2 \Delta + V_0  ) u = \lambda ( h ) u $, $ \| u \| = 1 $, then
  \begin{equation}
    \label{eq:der}
 h^2/C_0 \leq \lambda ( h ) \leq C_0 h  \ \ \text{ and }    \ \ 
  \| ( h D)^2 u \|^2 \leq C h \lambda ( h ) .
  \end{equation}
  \end{lemm}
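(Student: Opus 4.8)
The plan is to establish the three claims in turn: discreteness of the spectrum below $C_0h$, the lower bound $\lambda(h)\geq h^2/C_0$, the upper bound $\lambda(h)\leq C_0h$, and finally the regularity estimate $\|(hD)^2u\|^2\leq Ch\lambda(h)$. For the discreteness, I would use the standard Persson-type characterization of the bottom of the essential spectrum: since $V_0(x)>c_0>0$ for $|x|\geq R$, one has $\inf\Spec_{\mathrm{ess}}(-h^2\Delta+V_0)\geq c_0$, because away from a compact set the operator is bounded below by $c_0$ (the kinetic term being nonnegative), and $-h^2\Delta+V_0$ restricted to functions supported in $|x|\geq R$ dominates $c_0$. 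Choosing $C_0<c_0/h_0$ (equivalently, only looking at energies below $c_0$) then forces the spectrum in $[0,C_0h]$ to be discrete, consisting of eigenvalues of finite multiplicity, which gives the first displayed identity; in particular $\lambda(h)=\min\Spec_{\mathrm{pp}}$ is attained by some normalized $u$.

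For the upper bound $\lambda(h)\leq C_0h$, I would build a trial function by rescaling. Pick $x_0$ with $V_0(x_0)=0$; near $x_0$ write $V_0(x)\leq C|x-x_0|^2$ by Taylor (using $\min V_0=0$ so the gradient vanishes). Take $u_h(x)=h^{-n/4}\phi((x-x_0)/\sqrt h)$ for a fixed Schwartz bump $\phi$ with $\|\phi\|_{L^2}=1$; then $\langle(-h^2\Delta+V_0)u_h,u_h\rangle = h\|\nabla\phi\|^2 + O(h)$ by the quadratic bound on $V_0$, so the min-max principle yields $\lambda(h)\leq C_0h$. For the lower bound $\lambda(h)\geq h^2/C_0$: since $V_0\geq 0$, $\lambda(h)=\langle(-h^2\Delta+V_0)u,u\rangle\geq h^2\|\nabla u\|^2\geq 0$, which alone is not enough, so one instead argues that if $\|\nabla u\|$ were too small then $u$ would be nearly constant, contradicting $u\in L^2(\mathbb R^n)$ with $\|u\|=1$ together with $\int V_0|u|^2$ being bounded below on the region $\{V_0\geq c_0\}$ unless the mass concentrates in the compact region $\{|x|\leq R\}$ — and on that compact region a Poincaré-type inequality gives $\|\nabla u\|^2\gtrsim 1$ once the potential term is small, forcing $\lambda(h)\gtrsim h^2$.

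For the regularity bound, the idea is to test the eigenvalue equation against $(hD)^4u$, or equivalently to apply $(hD)^2$ and use an energy estimate. From $(-h^2\Delta+V_0)u=\lambda u$ we get $h^2\Delta u = (V_0-\lambda)u$, so $\|h^2\Delta u\|\leq \|V_0 u\| + \lambda$; but $\|V_0u\|^2 = \langle V_0^2 u,u\rangle$, and since $V_0$ is smooth with $\min V_0=0$ one controls $\langle V_0 u,u\rangle\leq \lambda$ while $V_0^2\leq \|V_0\|_{L^\infty(\mathrm{supp})}V_0$ on the relevant region — combined with exponential/Agmon-type decay of $u$ outside $\{V_0\leq \lambda\}$ this yields $\|V_0u\|^2\leq C\lambda$. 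Then $\|(hD)^2u\|^2$ relates to $\|h^2\Delta u\|^2$ up to lower-order terms controlled by interpolation ($\|(hD)^2u\|^2\leq \|h^2\Delta u\|\cdot\|u\| + \ldots$, using integration by parts on $\mathbb R^n$), and one concludes $\|(hD)^2u\|^2\leq Ch\lambda(h)$, the factor $h$ coming from the gain $\|h\nabla u\|^2=\langle(\lambda-V_0)u,u\rangle\leq\lambda$ fed back into the estimate.

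The main obstacle I expect is the lower bound $\lambda(h)\geq h^2/C_0$ and, more subtly, getting the \emph{correct power of $h$} in the final estimate $\|(hD)^2u\|^2\leq Ch\lambda(h)$ rather than a cruder bound like $O(\lambda)=O(h)$. The sharp factor $h\lambda(h)=O(h^2)$ requires carefully exploiting that the mass of $u$ sits in the semiclassically small well region where $V_0=O(h)$, together with Agmon localization to discard the contribution from $\{V_0\gtrsim 1\}$; a naive bound $\|V_0u\|\leq\|V_0\|_{L^\infty}$ would lose this. This is precisely the kind of quasimode regularity that is needed downstream to convert the $P_1$ ground state into a good quasimode for $P_0$ after Fourier transform.
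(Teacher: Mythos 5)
Your structure matches the paper's (discreteness, upper bound, lower bound, regularity bound), and the upper-bound step is essentially identical. But two of your steps are either vague or off by a power of $h$, and in both cases the paper's proof uses a cleaner tool you did not identify.

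\textbf{Lower bound $\lambda(h)\geq h^2/C_0$.} Your Poincar\'e-type argument is gestured at but not made to close: Poincar\'e on a ball gives $\|\nabla u\|^2\gtrsim\|u-\bar u\|^2$, and to conclude $\|\nabla u\|^2\gtrsim 1$ you still need to show $\|u-\bar u\|\gtrsim 1$, which requires a separate argument about the shape of $u$. The paper first invokes Lemma~\ref{l:loc} to replace $u$ by a compactly supported quasimode with $\|x_j u\|\leq R_0$, and then applies the semiclassical uncertainty principle $\|hD_{x_j}u\|\,\|x_j u\|\geq h/2$ directly. This yields $\|hD_{x_j}u\|\geq h/C_1$ with no further argument, and combined with $\|hDu\|^2+\|V_0^{1/2}u\|^2=\lambda(h)+\mathcal O(h^\infty)$ gives the lower bound. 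Your route is not irreparable, but the uncertainty-principle argument is the one that closes cleanly.

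\textbf{Regularity bound $\|(hD)^2u\|^2\leq Ch\lambda(h)$.} You correctly identify that the danger is losing a factor of $h$, but your fix does not deliver it. Since $\|(hD)^2u\|^2=\|h^2\Delta u\|^2$ exactly (by integration by parts on $\mathbb R^n$; the ``interpolation'' inequality $\|(hD)^2 u\|^2\leq\|h^2\Delta u\|\cdot\|u\|+\ldots$ you write is dimensionally wrong), you need $\|(V_0-\lambda)u\|^2\leq Ch\lambda\sim h^2$. Your stated bound $\|V_0u\|^2\leq C\lambda\sim h$ is off by a factor of $h$; to get the extra factor you would need sharp Agmon localization to $\{V_0\lesssim h\}$ (so $\int V_0^2|u|^2\leq Ch\int V_0|u|^2\leq Ch\lambda$) together with control of the exponential tail, which is considerably more work than you allot. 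The paper sidesteps all of this with a commutator argument: differentiate the eigenvalue equation to get $-h^2\Delta(hD_{x_k}u)+V_0(hD_{x_k}u)+h(D_{x_k}V_0)u=\lambda\, hD_{x_k}u$, pair with $hD_{x_k}u$, drop the nonnegative term $\|V_0^{1/2}hD_{x_k}u\|^2$, and crucially use the classical inequality $|D_{x_k}V_0|\leq CV_0^{1/2}$ (valid for any smooth nonnegative $V_0$ with bounded second derivatives). This gives $\|hD_{x_k}hDu\|^2\leq\lambda^2+Ch\|V_0^{1/2}u\|\,\|hD_{x_k}u\|\leq\lambda^2+Ch\lambda\leq(C+C_0)h\lambda$. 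The inequality $|DV_0|\leq CV_0^{1/2}$ is the missing ingredient that makes the sharp power drop out without any Agmon machinery; you should internalize it as the standard tool for this kind of estimate.

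\textbf{Discreteness.} Your Persson-type argument is fine and is an acceptable alternative to the paper's Fredholm/meromorphic-continuation argument via the auxiliary potential $V_1\geq c_0$; both are standard.
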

\begin{proof}
The fact that the spectrum near zero is discrete is standard:
 suppose that $ V_1 ( x ) = V_0 ( x ) $
for $ |x| \geq R $ and $ V_1 ( x )> c_0 $ everywhere. Then $ R ( \lambda ) := 
( - h^2 \Delta + V_1 ( x ) - \lambda )^{-1} : L^2 ( \mathbb R^n ) \to H^2_h ( \mathbb R^n ) $ exists
for $ \Re \lambda < c_0 $ if $ h $ is small enough: see \cite[Theorems 4.29, 7.1]{z12}. Then
\begin{equation}
\label{eq:resol}  - h^2 \Delta + V_0  - \lambda  = ( - h^2 \Delta + V_1  - \lambda ) ( I + R ( \lambda ) ( V_0 - V_1 ) ) .\end{equation}
Since for $ \Re \lambda < c_0 $, $ R ( \lambda ) ( V_0 - V_1 )$ is a compact operator on $ L^2 $ (for instance we can consider it as a pseudodifferential operator using \cite[\S 8.1]{z12} and then use
\cite[Theorem 4.28]{z12}) it follows from  \cite[Theorem D.4]{z12} that 
$ \lambda \mapsto ( I + R ( \lambda ) ( V_0 - V_1 ) )^{-1} $ is meromorphic in $ \Re \lambda < c_0 $.
That means that the resolvent of $  - h^2 \Delta + V_0$ is meromorphic there and the spectrum is discrete.

That $ h^2/C_0  < \lambda ( h ) \leq C_0 h $ is  equally standard.
To see the upper bound, suppose that $ \psi \in C^\infty_{\rm{c}} ( B ( 0 ,1 ) ) $ and $ \int_{\mathbb R^n } | \psi ( x ) |^2 dx = 1. $
Suppose that $ V_0 ( x_0) = 0 = \min V_0 $ and assume without loss of generality that $ x_0 = 0$.  Then $ V_0 ( x ) = \mathcal O ( |x|^2 ) $. 
Define $ \psi_h ( x) := h^{-\frac n4} \psi ( h^{-\frac12} x ) $. Then 
\[  \begin{split}   ( - h^2 \Delta + V_0 ( x ) ) \psi_h ( x )   & = h \left( - h^{-\frac n4} \Delta \psi (h^{-\frac12} x ) + 
h^{-\frac n4} \mathcal O ( ( |h^{-\frac12} x |^2  ) \psi ( h^{-\frac 12 }  x  ) \right)
=: h \widetilde \psi_h .\end{split}  \]
Since $ \|\psi_h \| = 1 $ and $ \|\widetilde \psi_h \| \leq C_0 $ for some constant $ C_0 $, this shows
$ \min_{ \| u \| = 1 } \langle ( - h^2 \Delta + V_0 ) u , u \rangle \leq C_0 h $, that is $ \lambda(h) \leq C_0 h$. 

For the lower bound on $ \lambda ( h ) $, we note that the localization Lemma \ref{l:local} applies to 
$ - h^2 \Delta + V_0 ( x ) $ and hence we can replace $ u $ by an approximate mode supported in 
$ |x| < R_0 $ and 
satisfying $ ( - h^2 \Delta + V_0 ( x ) - \lambda (h )) u = \mathcal O ( h^\infty )_{L^2}  $. 
In particular, $   \| x_j u \|_{L^2} \leq   R_0    $. 
Hence, from the uncertainty principle (see \cite[Theorem 3.9]{z12}), 
\[ \| h D_{x_j}  u \|_{L^2} \geq \frac h { 2\| x_j u \|} \geq h/ C_1. \]
Together with $ V_0 \geq 0 $, the equation for (the new) $ u $ gives
\begin{equation}
\label{eq:energy}  \| h D u \|^2 + \| V_0^{\frac12} u \|^2_{L^2} = \lambda( h ) + \mathcal O ( h^\infty) , \ \ \ \ 
D :=\tfrac1 i ( \partial_{x_1} , \ldots , \partial_{x_n } ),
\end{equation}
and this shows that $ \lambda( h) \geq h^2/ C_0 $ (after replacing $C_0$ with $\max(C_0,C_1^2)$ if necessary). 

To obtain the second part of \eqref{eq:der}, we consider the equation satisfied by $ h D_{x_k } u $:
\[   - h^2 \Delta ( h D_{x_k } u ) + V_0 h D_{x_k } u + h (D_{x_k } V_0) u = \lambda (h ) hD_{x_k } u . \]
Pairing the two sides with $ h D_{x_k} u$, integrating by parts in the first term, and then using
$ | D_{x_k } V_0 | \leq C V_0^{\frac12} $ and \eqref{eq:energy}  gives 
\begin{equation}
\label{eq:pair} \begin{split} 
 \| h D_{x_k} h D u \|^2  & \leq \lambda ( h ) \| h D_{x_k} u \|^2 - h \langle  (D_{x_k } V_0) u, h D_{x_k}u \rangle \\
 & \leq  \lambda(h)^2 + C h \| V^{\frac12}_0 u \| \| h D_{x_k } u \| \leq \lambda( h )^2 + 
C  h \lambda ( h )\\
&  \leq  ( C + C_0 ) h \lambda( h ) ,
\end{split}  \end{equation}
which gives \eqref{eq:der}.
\end{proof}

We also need an analogue of this lemma for $ P_1 $ defined in \eqref{eq:P1}:
\begin{lemm}
  \label{l:groundP1}
  Let $A(\xi)$ be given by \eqref{eq:A} and suppose that $ V_0 \in C^\infty ( \mathbb R^n ) $, $ \min V_0 = 0 $,
  $ V_0 ( x ) > c_0 > 0 $ for $  |x| \geq R $.  Then, there exists
  $ C_0 $ and $ h_0 $ such that for $ 0 < h < h_0 $, 
  \begin{equation*}
    %\label{eq:pp10}
    \Spec ( A(hD) + V_0  )  \cap [ 0 , C_0 h ]  =
  \Spec_{\rm{pp}} ( A(hD) + V_0  )  \cap [ 0 , C_0 h ] . \end{equation*}
  If $ \lambda (h) := \min \Spec_{\rm{pp}} ( A(hD) + V_0 ) $ and
  $ ( A(hD) + V_0  ) u = \lambda ( h ) u $, $ \| u \| = 1 $, then
  \begin{equation}
    \label{eq:der10}
 h^2/C_0 \leq \lambda ( h ) \leq C_0 h  \ \ \text{ and }    \ \ 
  \| ( h D)^2 u \|^2 \leq C h \lambda ( h ) .
  \end{equation}
  \end{lemm}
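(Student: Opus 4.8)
The plan is to repeat the proof of Lemma~\ref{l:ground} almost verbatim, with $A(hD)$ in place of $-h^2\Delta$ and with elementary properties of the symbol $A$ from \eqref{eq:A} standing in for the ellipticity of $-h^2\Delta$. Since $\psi_0$ is even, supported in $(-1,1)$, and $\equiv 1$ on $(-\tfrac12,\tfrac12)$, the symbol $A$ is smooth, bounded and nonnegative, vanishes only at $\xi=0$ where $A(\xi)=|\xi|^2+\mathcal O(|\xi|^4)$, and for suitable $\delta_0\in(0,1)$, $c_1>0$ it satisfies $A(\xi)\ge c_1^{-1}|\xi|^2$ for $|\xi|\le\delta_0$, $A(\xi)\ge c_1^{-1}$ for $|\xi|\ge\delta_0$, and $|\nabla A|\le c_1 A^{1/2}$, $|\partial^2 A|\le c_1$. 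Consequently $A(hD)$ is bounded and nonnegative (its symbol lies in $S(1)$), $P_1=A(hD)+V_0$ is bounded and self-adjoint, and the symbol $A(\xi)+V_0(x)-\lambda$ of $P_1-\lambda$ is $\ge c>0$ off the compact set $K_{\delta_0}:=\{\,|\xi|\le\delta_0,\ V_0(x)\le c_0\,\}$ (compact because $\{V_0\le c_0\}\subset B(0,R)$) whenever $\lambda=\mathcal O(h)$. This uniform ellipticity off a compact part of phase space is what replaces the ellipticity of $-h^2\Delta$ throughout.

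I expect the only step of the proof of Lemma~\ref{l:ground} that does not carry over directly to be the discreteness of $\Spec(P_1)$ near $0$: there one used that $(-h^2\Delta+V_1-\lambda)^{-1}$ is smoothing, so that $R(\lambda)(V_0-V_1)$ is a compact pseudodifferential operator, whereas here $A(hD)$ is bounded and $(A(hD)+V_1-\lambda)^{-1}$ does not regularise, so that argument breaks down. Instead I would rule out essential spectrum in $[0,c_0/2]$ by hand. Given $\lambda\le c_0/2$ and a singular sequence $u_k\rightharpoonup 0$, $\|u_k\|=1$, $(P_1-\lambda)u_k\to 0$, the identity $\langle A(hD)u_k,u_k\rangle+\langle V_0u_k,u_k\rangle\to\lambda$ together with $A,V_0\ge 0$ forces both terms to be small; for $h$ so small that $C_0h<c_0/2$, the bound $A(\xi)\ge c_1^{-1}$ on $\{|\xi|\ge\delta_0\}$ then makes $\mathbf 1_{\{|hD|\ge\delta_0\}}u_k$ small in $L^2$, and $V_0>c_0$ on $\{|x|\ge R\}$ makes $\mathbf 1_{\{|x|\ge R\}}u_k$ small in $L^2$. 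Hence $\mathbf 1_{\{|x|<R\}}\mathbf 1_{\{|hD|<\delta_0\}}u_k$ has $L^2$-norm bounded below, is bounded in $H^1$ (compactly supported Fourier transform), and so, by Rellich's theorem, converges along a subsequence in $L^2$ to a nonzero limit, contradicting $u_k\rightharpoonup 0$. This gives $\Spec(P_1)\cap[0,C_0h]=\Spec_{\rm{pp}}(P_1)\cap[0,C_0h]$ for $C_0h<c_0/2$.

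The estimates in \eqref{eq:der10} then follow the template of \eqref{eq:der}. For the upper bound the trial state $\psi_h(x)=h^{-n/4}\psi(h^{-1/2}(x-x_0))$, $\psi\in C^\infty_{\rm c}$, $\|\psi\|=1$, $V_0(x_0)=0$, gives $\langle V_0\psi_h,\psi_h\rangle=\mathcal O(h)$ as in Lemma~\ref{l:ground} and $\langle A(hD)\psi_h,\psi_h\rangle=\int A(h^{1/2}\eta)\,|\widehat\psi(\eta)|^2\,d\eta=\mathcal O(h)$ because $A(\zeta)=\mathcal O(|\zeta|^2)$ for $|\zeta|\le 1$, $A$ is bounded and $\widehat\psi$ is Schwartz; hence $\lambda(h)\le C_0h$. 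For the lower bounds one first observes that the ground state $u$ ($P_1u=\lambda(h)u$, $\|u\|=1$) is, exactly as in the proof of Lemma~\ref{l:loc}, microlocally concentrated in $K_{\delta_0}$ up to $\mathcal O(h^\infty)$ in every semiclassical Sobolev space $H^N_h$, since $P_1-\lambda$ is semiclassically elliptic off $K_{\delta_0}$; in particular $\|\langle x\rangle^N u\|=\mathcal O(1)$ for all $N$ and $\mathbf 1_{\{|h\xi|\ge\delta_0\}}u=\mathcal O(h^\infty)$ in $H^N_h$. Writing $u^{\rm lo}=\chi_0(hD)u$ for a cut-off $\chi_0\in C^\infty_{\rm c}(B(0,\delta_0))$ equal to $1$ near $0$, one has $\|u^{\rm lo}\|=1-\mathcal O(h^\infty)$ and $\|x_j u^{\rm lo}\|\le\|x_j u\|+\mathcal O(h)=\mathcal O(1)$, so the uncertainty principle \cite[Theorem~3.9]{z12} gives $\|hD_{x_j}u^{\rm lo}\|\ge h/C$; since $A(\xi)\ge c_1^{-1}|\xi|^2$ on $\supp\chi_0$,
\begin{equation*}
\lambda(h)\ \ge\ \langle A(hD)u,u\rangle\ \ge\ c_1^{-1}\sum_j\|hD_{x_j}u^{\rm lo}\|^2\ \ge\ h^2/C_0 .
\end{equation*}

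Finally, $A(hD)$ commutes with $hD_{x_k}$, so $v_k:=hD_{x_k}u$ solves $(P_1-\lambda)v_k=-h(D_{x_k}V_0)u$ exactly as in Lemma~\ref{l:ground}; pairing with $v_k$, using $|D_{x_k}V_0|\le CV_0^{1/2}$, the energy identity $\langle A(hD)u,u\rangle+\|V_0^{1/2}u\|^2=\lambda$, the bound $\|hD_{x_k}u\|^2\le C\lambda+\mathcal O(h^\infty)$ (again from $A(\xi)\ge c_1^{-1}|\xi|^2$ near $0$ and the frequency localization), and $\sum_k\langle A(hD)v_k,v_k\rangle=\int A(h\xi)|h\xi|^2|\widehat u|^2\,d\xi\ge c_1^{-1}\|(hD)^2u\|^2-\mathcal O(h^\infty)$, one obtains $\|(hD)^2u\|^2\le Ch\lambda(h)$, as required. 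Thus the only place genuinely new work is needed — and the main obstacle — is the discreteness near $0$: the resolvent‑compactness argument of Lemma~\ref{l:ground} is simply unavailable for the bounded operator $A(hD)+V_0$, and must be replaced by the Weyl‑sequence argument above, all the other estimates being a routine rewriting with $A(\xi)$ in the role played by $|\xi|^2$.
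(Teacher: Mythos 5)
Your proof is correct, and you correctly identified the one place in Lemma~\ref{l:ground} that does not transfer verbatim: the argument that $R(\lambda)(V_0-V_1)$ is compact relied on $(-h^2\Delta+V_1-\lambda)^{-1}$ gaining two derivatives, and $A(hD)$ is bounded, so that particular mechanism is gone. Where you diverge from the paper is in how you repair it. You rule out essential spectrum in $[0,C_0h]$ directly with a Weyl sequence: the energy identity forces such a sequence to concentrate in $\{|x|<R\}\cap\{|hD|<\delta_0\}$, and compactness of $\mathbf 1_{\{|x|<R\}}\mathbf 1_{\{|hD|<\delta_0\}}$ (equivalently Rellich) gives a contradiction with $u_k\rightharpoonup 0$. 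The paper instead \emph{keeps} the analytic Fredholm framework but changes the perturbation: rather than modify the potential to some $V_1$, it picks $\chi\in C^\infty_{\rm c}(\mathbb R^{2n})$, so that $A(hD)+V_0+\chi^{\rm w}(x,hD)-\lambda$ is elliptic hence invertible with bounded inverse $R(\lambda)$, and then in the analogue of \eqref{eq:resol} the perturbation $-\chi^{\rm w}(x,hD)$ is a compactly supported pseudodifferential operator, so it is \emph{itself} compact; meromorphy of $(I-R(\lambda)\chi^{\rm w})^{-1}$ then follows exactly as before without needing $R(\lambda)$ to regularize. Both routes are valid; the paper's is a one-line change that reuses the existing machinery (and also works for the $\mathbb T^n$ model $H_0$ without further thought), whereas yours is more elementary and makes the role of phase-space compactness explicit, at the cost of a separate Weyl-sequence argument. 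For the three estimates in \eqref{eq:der10} your argument matches the paper's, the only cosmetic difference being that the paper works with the elliptic factorization $A(hD)=\sum_j B_j(hD)^2(hD_{x_j})^2$ whereas you work directly with the two-sided bound $c_1^{-1}|\xi|^2\le A(\xi)\le C|\xi|^2$ near $\xi=0$ together with frequency localization of $u$.
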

\begin{proof}
We follow the steps in the proof of Lemma \ref{l:ground}. As in the beginning of the proof of
Lemma \ref{l:local} we see that there exists $ \chi \in C^\infty_{\rm{c}} ( \mathbb R^{2n} ) $
such that $ A ( hD ) + V_0 ( x ) + \chi^{\rm{w}} ( x, h D ) - \lambda $, $ \lambda = 
\mathcal O ( h ) $,  is invertible. We then 
have an analogue of \eqref{eq:resol} (with $ - h^2 \Delta $ replaced by $ A ( h D) $ and 
$ V_1 $ by $ V_0 + \chi^{\rm{w}} ( x, h D ) $) which shows that the resolvent of 
$ A ( hD ) + V_0 $ is meromorphic near $ 0 $ and the spectrum is discrete there. 

To see the upper bound on $ \lambda ( h ) $, 
 suppose that $ \psi \in C^\infty_{\rm{c}} ( B ( 0 ,1 ) ) $ and $ \int_{\mathbb R^n } | \psi ( x ) |^2 dx = 1. $
Suppose that $ V_0 ( x_0) = 0 = \min V_0 $ and assume without loss of generality that $ x_0 = 0$.  Then $ V_0 ( x ) = \mathcal O ( |x|^2 ) $. We define $B(t)$ so that
$2(2-\cos t-\psi_0(t))=t^2B(t)^2$ and thus
\begin{equation}
\label{eq:A2B}
\begin{gathered}
A(hD)=\sum_{j=1}^n B_j(hD)^2 (hD_{x_j})^2,\quad B_j ( \xi ) = B ( \xi_j ) , \\
B_j \in S ( \langle \xi_j \rangle^{-1} ), \ \ B_j (\xi ) \geq \langle \xi_j \rangle^{-1} / C , \ \ \xi \in \mathbb R^n.
\end{gathered}
\end{equation}
For $ \psi_h ( x) := h^{-\frac n4} \psi ( h^{-\frac12} x ) $ we have $ 
( A(hD) + V_0 ) \psi_h    = h \widetilde \psi_h $, where 
\[  \begin{split}  \widetilde \psi_h ( x )   & = h^{-\frac n4} \sum_{j=1}^n B_j(hD)^2 (D_j^2 \psi) (h^{-\frac12} x ) + 
h^{-\frac n4} \mathcal O ( ( |h^{-\frac12} x |^2  ) \psi ( h^{-\frac 12 }  x  ) = \mathcal O ( 1 )_{L^2} 
 .\end{split}  \]
This shows that $ \lambda ( h ) \leq C_0 h $. 

For the lower bound on $ \lambda ( h ) $,  we replace 
$ u $ by a quasimode localized using $ \chi \in C^\infty_{\rm{c}} $ (allowed by Lemma \ref{l:loc})
to see that a pairing of \eqref{eq:A2B} with 
$ u $ gives
\begin{equation}
\label{eq:B2la}  \sum_{ j=1}^n \| B_j ( h D) h D_{x_j} u \|^2 + \| V_0^{\frac12} u \|^2 = \lambda ( h ) + \mathcal O ( h^\infty ) .
\end{equation}
Since $ u $ is localized in $ \xi $ and $ x $, and $ B_j ( hD ) $ is elliptic,  we can again 
use the uncertainty principle: 
\[  \| B_j ( h D ) h D_{x_j}  u \| \geq \| h D_{x_j } u \|/C - \mathcal O ( h^\infty ) \geq
h/ ( 2 C \| x_j u \|) \geq h/C_1. \]
Combined with \eqref{eq:B2la} this gives $ \lambda ( h ) \geq h^2/C_1^2 $. 

To obtain the second part of \eqref{eq:der10} we again differentiate the equation with $ h D_{x_k}$
and pair the result with $ h D_{x_k } u $ (see \eqref{eq:pair})
\[  \sum_{ j=1}^n \| B_j ( h D ) hD_{x_j } D_{x_k } u \|^2 \leq \lambda ( h ) \| hD_{x_k} u \|^2 +
 C h \| V_0^{\frac12} u \| \| h D_{x_k } u \| \leq C _1 h \lambda( h) . \]
 Since $ u $ is localized (by Lemma \ref{l:loc}) and $ B_j $'s are elliptic (in $ S ( \langle \xi_j \rangle^{-1} ) $), 
 we obtain the desired bound.
\end{proof}
    
Suppose $ \varphi \in C_{\rm{c}}^\infty ( \mathbb R^n; [0,1]) $ is supported in a small neighbourhood of $ 0 $ and equal to $ 1 $ near $ 0 $. Then $ u $ in the statement of Lemma \ref{l:ground}
satisfies
\[   u = \varphi ( h D ) u + \mathcal O ( h^\infty )_{ \mathscr S} . \]
(This follows from semiclassical ellipticity of $ - h^2 \Delta + V_0 - \lambda ( h) $ for $ |\xi| > \delta $ for any $ \delta > 0$, 
see \cite[Theorem 4.29]{z12}.) We make sure to choose $ \varphi$ so that $\psi_0\equiv1$ on $\supp\varphi$, where $\psi_0$ is given by \eqref{eq:A}, and write 
\[  \sum_{ j=1}^n 2 ( 1 - \cos \xi_j ) \varphi ( \xi ) = |\xi|^2 \varphi ( \xi ) + 
\sum_j^n \xi_j^4 a_j (\xi ) , \ \ \   a_j \in C^\infty_{\rm{c}} ( \mathbb R^n )   . \]
Then
\[ \begin{split}  \langle P_1 u , u \rangle & = 
\langle  ( A ( h D) \varphi ( h D ) + V_0 )u, u \rangle + \mathcal O ( h^\infty ) \\
& =  \langle   ( - h^2 \Delta + V_0 ) u , u \rangle + \sum_{ j=1}^n \langle a_j ( h D ) ( hD_{x_j}) ^4 u, u \rangle
+ \mathcal O ( h^\infty ) \\
& \leq \lambda (h) + C \| (h D)^2 u \|^2 + \mathcal O ( h^\infty ) \leq \lambda (h) ( 1 + C_1 h ) , \end{split} \]
where we used \eqref{eq:der} to get the last inequality. This shows that
\begin{equation}
\label{eq:minH0}  \min \Spec ( H_0 ) = \min \Spec ( P_1 ) + \mathcal O ( h^\infty ) \leq 
\min \Spec ( P_0 ) ( 1 + \mathcal O ( h ) ) . \end{equation}
Similarly we obtain the inequality with $ H_0 $ and $ P_0 $ replaced (using Lemma \ref{l:groundP1} instead of 
Lemma \ref{l:ground}):
\begin{equation}
\label{eq:minP0}
 \min \Spec ( P_0 )  \leq 
\min \Spec ( P_1 ) ( 1 + \mathcal O ( h ) ) = \min \Spec ( H_0 ) ( 1 + \mathcal O ( h ) )  
 . \end{equation}

The inclusions \eqref{eq:H2H0} show that
\[   \min \Spec ( H ) \geq \min \Spec (H_0 ) - \mathcal O ( h^\infty ) , \ \ \
\min \Spec ( P ) \geq \min \Spec ( P_0 ) - \mathcal O ( h^\infty) . \]
Since we can use the ground state eigenfunctions of $ H_0 $ and  $ P_0 $ as test functions for
$ H $ and $ P $ respectively (their localization to $ \Omega $ is key here), the opposite inequalities 
follow.  Combined with \eqref{eq:minH0} and \eqref{eq:minP0} this concludes the proof of \eqref{eq:comp}.

\section{Numerical experiments} 

Numerical investigation of the spectrum of the discrete operator \eqref{eq:Pd} poses some subtle challenges
which require review of different aspects of the theory. Our experiments are conducted in 1D and we specialize
to that case in this section.

\subsection{Auxiliary family of operators}
Let $V \in C^1(\mathbb T^1)$, 
$ \mathbb T^1 := \mathbb R /  \mathbb Z $. 
 To study $\Spec(P_{\rm d}(h))$ for incommensurable $h$, we introduce an auxiliary family generalizing \eqref{eq:Pd}
\begin{equation*}
%\label{eq:aux_ham}
 (P_{\rm d}(h,\theta)v)(\gamma) := 2v(\gamma)-v(\gamma+1) - v(\gamma-1) + V( h\gamma +\theta)v(\gamma), \quad \gamma \in\ZZ
 \end{equation*}
with $h>0$ and $\theta \in \mathbb R / \ZZ.$ In addition, we define 
\begin{equation}
\label{eq:defSig} \Sigma_h := \bigcup_{\theta \in \mathbb R / \ZZ} \Spec(P_{\rm d}(h,\theta)).
\end{equation}
We recall the following well-known result.
\begin{lemm}\cite[(1.2)]{HS88}
\label{lemm:easy_obsv}
For $h \notin  \mathbb Q$ the spectrum of $P(h,\theta)$ does not depend on $\theta$ and thus $\Sigma_h = \Spec(P(h,\theta))$ for all $\theta \in \mathbb R / \ZZ.$
\end{lemm}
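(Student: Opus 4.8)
The plan is to exploit the covariance of the family $P_{\rm d}(h,\theta)$ under the lattice shift together with the fact that, for irrational $h$, the orbit $\{\theta+nh \bmod 1 : n\in\mathbb Z\}$ is dense in $\mathbb R/\mathbb Z$. First I would record the conjugation relation: letting $U:\ell^2(\mathbb Z)\to\ell^2(\mathbb Z)$ be the unitary shift $(Uv)(\gamma)=v(\gamma+1)$, a direct computation from the definition of $P_{\rm d}(h,\theta)$ gives
$$ U\,P_{\rm d}(h,\theta)\,U^{-1} = P_{\rm d}(h,\theta+h), $$
since $U$ commutes with the discrete Laplacian and conjugates multiplication by $V(h\gamma+\theta)$ into multiplication by $V(h(\gamma+1)+\theta)=V(h\gamma+(\theta+h))$. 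Hence $\Spec(P_{\rm d}(h,\theta))=\Spec(P_{\rm d}(h,\theta+h))$ for every $\theta$, and by iteration $\Spec(P_{\rm d}(h,\theta))=\Spec(P_{\rm d}(h,\theta+nh))$ for all $n\in\mathbb Z$.

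Next I would observe that $\theta\mapsto P_{\rm d}(h,\theta)$ is continuous in operator norm: since $V\in C^1(\mathbb T^1)$ is uniformly continuous, $\lVert P_{\rm d}(h,\theta)-P_{\rm d}(h,\theta')\rVert\le\sup_{\gamma}|V(h\gamma+\theta)-V(h\gamma+\theta')|\to0$ as $\theta'\to\theta$. Combining this with the covariance: fix $\theta,\theta'\in\mathbb R/\mathbb Z$; by density of the orbit there are $n_k\in\mathbb Z$ with $\theta+n_kh\to\theta'$ in $\mathbb R/\mathbb Z$, so $P_{\rm d}(h,\theta+n_kh)\to P_{\rm d}(h,\theta')$ in norm while all these operators share the common spectrum $\Sigma:=\Spec(P_{\rm d}(h,\theta))$, a fixed compact subset of $\mathbb R$. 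A standard perturbation argument then yields $\Spec(P_{\rm d}(h,\theta'))=\Sigma$: if $\lambda\notin\Sigma$ then $\lVert(P_{\rm d}(h,\theta+n_kh)-\lambda)^{-1}\rVert=d(\lambda,\Sigma)^{-1}$ is bounded uniformly in $k$, so a Neumann series shows $P_{\rm d}(h,\theta')-\lambda$ is invertible; conversely if $P_{\rm d}(h,\theta')-\lambda$ is invertible then so is $P_{\rm d}(h,\theta+n_kh)-\lambda$ for $k$ large, forcing $\lambda\notin\Sigma$. Therefore $\Spec(P_{\rm d}(h,\theta'))=\Spec(P_{\rm d}(h,\theta))$ for all $\theta,\theta'$.

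Finally, once the spectrum is known to be independent of $\theta$, the definition \eqref{eq:defSig} gives at once $\Sigma_h=\bigcup_{\theta\in\mathbb R/\mathbb Z}\Spec(P_{\rm d}(h,\theta))=\Spec(P_{\rm d}(h,\theta))$ for any fixed $\theta$. The only step that requires any care is the passage from ``equal spectra along a dense orbit'' to ``equal spectra everywhere''; this is precisely where norm-continuity of the family and the uniform resolvent bound on $\mathbb C\setminus\Sigma$ enter, and it is entirely routine. (One could equally well phrase it as upper and lower semicontinuity of the spectrum for norm-continuous families.)
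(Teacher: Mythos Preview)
Your argument is correct and is precisely the standard one: shift covariance gives $\Spec(P_{\rm d}(h,\theta))=\Spec(P_{\rm d}(h,\theta+nh))$ for all $n\in\mathbb Z$, density of the orbit $\{\theta+nh\}$ for $h\notin\mathbb Q$ together with norm-continuity of $\theta\mapsto P_{\rm d}(h,\theta)$ then forces the spectrum to be constant in $\theta$. The resolvent comparison you use to close the gap is the right tool, and there is nothing missing.

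As for comparison with the paper: there is nothing to compare. The paper does not give a proof of this lemma; it merely records it as a well-known fact and cites \cite[(1.2)]{HS88}. Your write-up supplies exactly the short proof one would expect to find behind that citation.
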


Under the assumption $h =p/q \in \mathbb Q$, operators $P_{\rm d}(h,\theta)$ are periodic and we can describe the spectrum of $P_{\rm d}(h,\theta)$ using Bloch-Floquet theory with Bloch boundary condition $v_{n+q}=e^{i \theta_1 q} v_n.$ This implies that (see \cite[Remark 1.10]{KRL14})
\[ \Spec(P_{\rm d}(h,\theta'))= \bigcup_{\theta_1 \in \mathbb R / \ZZ} \Spec(M_{(\theta_1,\theta')})  \ \text{ and }   \ \Sigma_{h}  = \bigcup_{\theta \in \mathbb R^2 / \ZZ^2 } \Spec(M_{\theta})\]
with $M_{\theta} \in \CC^{q\times q}$ given by 
\begin{gather*} M_{\theta}: = 2 I_q - e^{- i \theta_1} K_{q}^* - e^{ i \theta_1} K_{q}+ \sum_{\beta \in \ZZ} w_{\beta} e^{i\beta \theta_2} (J_{p,q})^{\beta}, \\
J_{p,q} := \operatorname{diag}(e^{2\pi i (j-1)p/q}),  \ \ \ \ (K_q)_{ij}: =\begin{cases} 1 \text{ if } j=(i+1) \!\! \!\!\mod  \! q \\ 0 \text{ otherwise.} 
\end{cases}
\end{gather*}

It follows that approximating the spectrum of $\Spec(P_{\rm d}(h))$ for $h \in \mathbb Q$ reduces to computing spectra of $M_{\theta_1,0}$ over a fine enough discretisation of $\theta_1.$ 
To study $\Spec(P_{\rm d}(h))$ for $h \in \mathbb R \setminus \mathbb Q$, we use that $\Sigma_h = \Spec(P_{\rm d}(h))$ by Lemma \ref{lemm:easy_obsv} and the following quantitative continuity bound
 obtained by Avron, v.~Mouche and Simon:
\begin{lemm}\cite[Proposition 7.1]{AMS78}
\label{lemm:easy_obsv2} Let $V \in C^1(\mathbb R / \ZZ)$, then the map $\mathbb R \ni h \mapsto \Sigma_h$ is $1/2$-H\"older continuous in Hausdorff distance $d_H$. In particular, for $h,h' \in \mathbb R$
\[ d_H(\Sigma_{h},\Sigma_{h'}) \le C_V \vert h - h'\vert^{1/2},\]
\end{lemm}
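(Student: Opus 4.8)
The plan is to prove the estimate by a localized quasimode comparison that exploits the translation-covariance of the family $\theta\mapsto P_{\mathrm d}(h,\theta)$, and then to symmetrize in $h,h'$. Throughout write $P(h,\theta):=P_{\mathrm d}(h,\theta)$. Two elementary observations set the stage. First, $\lVert P(h,\theta)\rVert\le 4+\lVert V\rVert_{L^\infty}$, so every $\Sigma_h$ is contained in a fixed bounded interval, and $\theta\mapsto P(h,\theta)$ is Lipschitz in operator norm with constant $\lVert V'\rVert_{L^\infty}$, whence each $\Sigma_h$ is compact; thus $d_H(\Sigma_h,\Sigma_{h'})=\max\bigl(\sup_{E\in\Sigma_h}d(E,\Sigma_{h'}),\ \sup_{E'\in\Sigma_{h'}}d(E',\Sigma_h)\bigr)$, and by symmetry it suffices to bound $d(E,\Sigma_{h'})$ for a fixed $E\in\Sigma_h$. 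Second, if $(\tau_kv)(\gamma):=v(\gamma-k)$ denotes the shift on $\ell^2(\mathbb Z)$, then $\tau_k^{-1}P(h,\theta)\tau_k=P(h,\theta+kh)$, so a quasimode localized near a lattice point $k$ can be transported to the origin at the sole cost of replacing $\theta$ by $\theta+kh\bmod 1$.

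Fix $E\in\Sigma_h$, so $E\in\Spec(P(h,\theta))$ for some $\theta$, and for every $\delta>0$ pick a unit vector $w\in\ell^2(\mathbb Z)$ with $\lVert(P(h,\theta)-E)w\rVert<\delta$. Fix a scale $L\in\mathbb N$ and a standard smooth quadratic partition of unity at scale $L$: functions $\chi_j$ supported in $(jL-L,\,jL+L)$ with $\sum_{j\in\mathbb Z}\chi_j^2\equiv1$, $|\chi_j(\gamma\pm1)-\chi_j(\gamma)|\le C/L$, and with the supports of the discrete gradients $\nabla\chi_j$ of bounded overlap. Since the potential is a multiplication operator it commutes with each $\chi_j$, so $[P(h,\theta),\chi_j]$ only involves the discrete Laplacian and $\sum_j\lVert[P(h,\theta),\chi_j]w\rVert^2\le C_0L^{-2}\lVert w\rVert^2$; combining this with $\sum_j\lVert\chi_j(P(h,\theta)-E)w\rVert^2=\lVert(P(h,\theta)-E)w\rVert^2<\delta^2$ and $(P(h,\theta)-E)\chi_jw=\chi_j(P(h,\theta)-E)w+[P(h,\theta),\chi_j]w$ gives
\[
\sum_j\lVert(P(h,\theta)-E)\chi_jw\rVert^2\le 2\delta^2+C_0L^{-2},\qquad \sum_j\lVert\chi_jw\rVert^2=1 ,
\]
so some $j_0$ satisfies $\lVert(P(h,\theta)-E)\chi_{j_0}w\rVert\le(2\delta^2+C_0L^{-2})^{1/2}\lVert\chi_{j_0}w\rVert$. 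Normalizing $v:=\chi_{j_0}w/\lVert\chi_{j_0}w\rVert$ and translating by $j_0L$ (so that $\theta$ becomes $\theta':=\theta+j_0Lh\bmod1$) yields a unit vector $\hat v$ supported in $(-L,L)$ with $\lVert(P(h,\theta')-E)\hat v\rVert\le(2\delta^2+C_0L^{-2})^{1/2}$.

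Now compare $h$ with $h'$: on $\{|\gamma|\le L\}$ one has $|V(h'\gamma+\theta')-V(h\gamma+\theta')|\le\lVert V'\rVert_{L^\infty}L|h-h'|$, so $\lVert(P(h',\theta')-P(h,\theta'))\hat v\rVert\le\lVert V'\rVert_{L^\infty}L|h-h'|$, and since $P(h',\theta')$ is selfadjoint and $\Spec(P(h',\theta'))\subset\Sigma_{h'}$,
\[
d(E,\Sigma_{h'})\le\lVert(P(h',\theta')-E)\hat v\rVert\le(2\delta^2+C_0L^{-2})^{1/2}+\lVert V'\rVert_{L^\infty}L|h-h'| .
\]
Letting $\delta\downarrow0$, this holds with the first term replaced by $C_0^{1/2}L^{-1}$ for every $L\in\mathbb N$. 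Choosing $L\sim(\lVert V'\rVert_{L^\infty}|h-h'|)^{-1/2}$ balances the two terms and gives $d(E,\Sigma_{h'})\le C_V|h-h'|^{1/2}$ whenever $|h-h'|$ is small enough that this $L$ is $\ge1$; for $|h-h'|\ge1$ the crude bound $d_H(\Sigma_h,\Sigma_{h'})\le 4+2\lVert V\rVert_{L^\infty}\le(4+2\lVert V\rVert_{L^\infty})|h-h'|^{1/2}$ already suffices. Exchanging the roles of $h$ and $h'$ yields the same bound for $\sup_{E'\in\Sigma_{h'}}d(E',\Sigma_h)$, and the lemma follows.

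The main obstacle is obtaining the localization error at the sharp rate $O(L^{-1})$ rather than $O(L^{-1/2})$: an $L^1$-type cutoff would give only $O(L^{-1/2})$, hence a $1/3$-Hölder exponent after optimization, so one must use the quadratic partition $\sum\chi_j^2\equiv1$ together with the bounded overlap of the $\nabla\chi_j$, which makes the summed commutator contribution genuinely $O(L^{-2})$. One must also keep the initial quasimode error $\delta$ independent of $L$, which is why $\delta$ is fixed first, the localization is performed, and only then is the optimization in $L$ carried out. (A more structural route identifies $\Sigma_h$ with the spectrum of a canonical selfadjoint element of the rotation $C^*$-algebra and invokes continuity of that field of algebras; the elementary argument above has the advantage of producing the quantitative $1/2$-Hölder rate directly.)
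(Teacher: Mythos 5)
Your argument is correct. The paper itself gives no proof here --- it simply cites \cite[Proposition 7.1]{AMS78} --- and what you have written is essentially the standard Avron--van Mouche--Simon argument: translation covariance $\tau_k^{-1}P(h,\theta)\tau_k=P(h,\theta+kh)$ to reduce to a quasimode supported near the origin, a quadratic (IMS-type) partition of unity $\sum_j\chi_j^2\equiv1$ at scale $L$ so that the summed commutator error is $O(L^{-2})$ rather than $O(L^{-1})$, a pigeonhole to extract a localized quasimode, comparison of $V(h\gamma+\theta')$ with $V(h'\gamma+\theta')$ on a window of length $L$, and optimization $L\sim|h-h'|^{-1/2}$. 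The small cosmetic point is that your bound should read $2\delta^2+2C_0L^{-2}$ after the elementary inequality $(a+b)^2\le 2a^2+2b^2$, but this only changes the constant $C_V$.
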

This results implies that the spectrum $\Spec(P_{\rm d}(h))$ for irrational $h$ can be well-approximated by computing spectra of $M_{\theta}$ for $\theta \in \mathbb T^2.$  Thus, in addition to $ d( h) $ defined in 
\eqref{eq:comp}, we numerically study
\begin{equation*}
%\label{eq:defDh} 
D ( h ) := \frac{ \min \Sigma_h}{ \min \Spec ( P_{\rm{c}} ( h ) ) },  \end{equation*} 
where $ \Sigma_h $ was given in \eqref{eq:defSig}.

The numerical results are shown in Figures \ref{fig:rational} and \ref{f:optimal} for rational values of 
$ h$ and in Figures \ref{fig:irrational} and \ref{f:optimal2} for irrational values (that is, using $ D( h )$ above as proxy). The figures show the optimality of Theorem \ref{thm:main} in both cases. For irrational $h$ we have $D(h)=d(h)$. For rational $h$ we have $D(h)\le d(h)$. While $d(h)$ in general is not continuous in $h$, $D(h)$ is continuous in $h$. 

The experiments are performed for four potential:
\begin{enumerate}\label{eq:fourpotentials}
\item $V_1(x):=1+\cos(2\pi x)$, the \emph{subcritical} Harper operator with non-degenerate minima.
\item $V_2(x):=\cos^4(2\pi x)$, a potential with degenerate minima, that is $V^{(k)}(1/4)=0,$ 
$ k < 4 $, but $V^{(4)}(1/4) = 24(2\pi)^4.$
\item $V_3(x) := \frac12 {\sin(4\pi x)} - \cos(2\pi x)  + \frac14 {3\sqrt{3}} $, a potential with asymmetric wells at points $x =  \ZZ - \frac{1}{12}$ with series expansion $V_3(x)= {3\sqrt{3}}\pi^2 (x+\frac1{12})^2 - 2\pi^3 (x+ \frac{1}{12})^3 - 3\sqrt 3\pi^4 (x+ \frac{1}{12})^4+ \mathcal O((x+ \frac{1}{12})^5).$
\item $V_4(x):=\operatorname{exp}(-1/\sin^2(2\pi x))$, a non-analytic $C^{\infty}$ potential with minima of infinite order at the zeros of the sine function.
\end{enumerate}

We now show that it is in general \emph{not true} that $\mathbb R \ni h \mapsto \min \Spec(P_{\rm d}(h))$, let alone $\mathbb R \ni h \mapsto \Spec(P_{\rm d}(h)),$ is continuous. The maps however are continuous on $ \mathbb R \setminus \mathbb Q.$

%\begin{figure}
%\includegraphics[width=7.5cm]{Pot_MZ.eps}     
%\includegraphics[width=5cm]{Pot_SB.eps}
%\includegraphics[width=7.5cm]{inf_well.eps}
%\caption{$V_5(x):=\operatorname{exp}(-1/\sin^2(x))$ (left) and a potential $V_6$ with infinitely many wells per fundamental domain (right). \label{fig:pots}}
%\end{figure}

\begin{figure}
{\begin{tikzpicture}
\node at (-6,0) {\includegraphics[width=16cm]{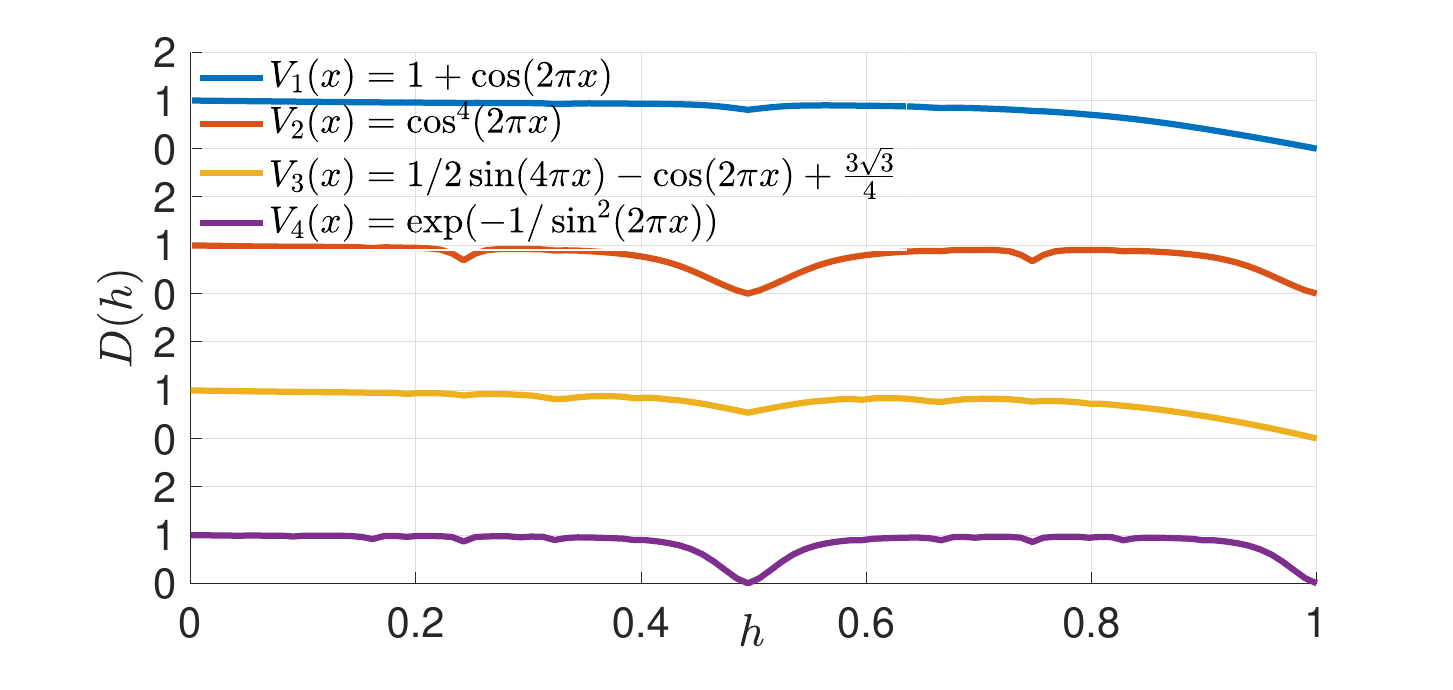}};
  %   \node at (-3.5-9.5,0){$D(h) $};
 %    \node at (-3.5+1+3.8,-3.0){$\hbar $};
\end{tikzpicture}}
\caption{\label{fig:irrational}Illustration of the limiting quantity $D ( h ) := \frac{ \min \Sigma_h}{ \min \Spec ( P_{\rm{c}} ( h ) ) } $ in \eqref{eq:comp} sampled at rational $h \in  \mathbb Q$  and stacked vertically. Since $D(h)$ is continuous in $h$ and $D(h)=d(h)$ for irrational $h$ this serves as a proxy for the limiting quantity in \eqref{eq:comp} for $h\in\mathbb R\setminus\mathbb Q$. It follows from \cite{DSS} that
$ D( h ) \leq 1 $ for $ h \in \mathbb R $.}
\end{figure}

\begin{figure}
{\begin{tikzpicture}
\node at (-6,0) {\includegraphics[width=16cm]{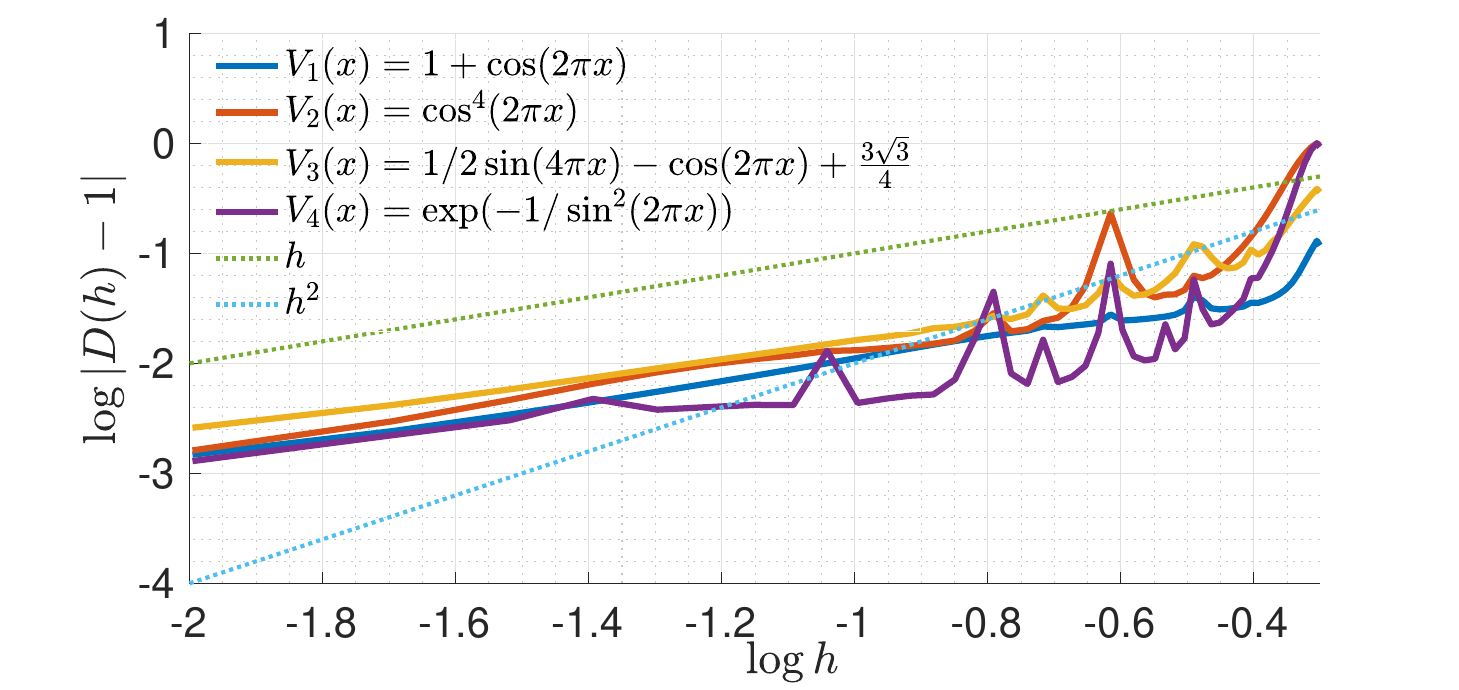}};
%     \node at (-3.5-9.8,0){$\log \vert D(h)-1 \vert$};
%     \node at (-3.5+1+3.8,-3){$\log \vert \hbar\vert $};
     \end{tikzpicture}}
\caption{\label{f:optimal2} $\log\log $ plot associated with Figure \ref{fig:irrational} indicating that $ |D( h )-1| = \mathcal O (h ) $ in Theorem \ref{thm:main} 
is optimal.}
\end{figure}

\subsection{(Dis)-continuity of spectra}
\label{s:con}
To explain the discrepancy between Figures \ref{fig:rational} and \ref{fig:irrational}, especially close to $h =1/2$ and $h =1$, we argue that $\Spec ( P_{\rm{d}} ( h ) )$ and also $\min \Spec ( P_{\rm{d}} ( h ) )$ are in general discontinuous in the parameter $h \in \mathbb R$. By Lemma \ref{lemm:easy_obsv2}, it suffices to show that $\Spec(P_{\rm d}(h))  \neq \Sigma_h$ and $\min \Spec(P_{\rm d}(h)) >\min \Sigma_{h}$, respectively for suitable choices of $h$. We illustrate this in Figures \ref{fig:Hofstadter1} and \ref{fig:Hofstadter2}, respectively. 

\subsubsection{Discontinuities at $h =\frac12$}
\label{s:dis}

For $V(x) = 1+\cos(2\pi x)$ we find
\[M_{\theta} = \begin{pmatrix} 3-2\cos \theta_1 & \cos \theta_2  \\   \cos \theta_2  & 3-2\cos(\theta_1 + \pi) \end{pmatrix}. \]
The spectrum of this matrix is given by 
\[ \Spec(M_{\theta}) = \left\{ 3\pm \frac{\sqrt{5 + 4\cos(2 \theta_1) + \cos(2\theta_2)}}{\sqrt{2}} \right\}.\]
For $\theta_2 =0$, we have $\bigcup_{\theta_1 \in \mathbb R} \Spec(M_{\theta_1,0}) = [3-\sqrt{5},2] \cup [4,3+\sqrt{5}],$ 
while $\bigcup_{\theta \in \mathbb R^2} \Spec(M_{\theta}) = [3-\sqrt{5},3+\sqrt{5}].$

Similarly, if $V(x)=\cos^4(2\pi x)$, then 
\[M_{\theta} = \operatorname{diag}(2-2\cos(\theta_1) + \cos^4(\theta_2),2-2\cos(\theta_1+\pi) + \cos^4(\theta_2)),\]
with $\bigcup_{\theta_1 \in \mathbb R} \Spec(M_{\theta_1}) = [1,5]$ while $\bigcup_{\theta \in \mathbb R^2} \Spec(M_{\theta}) = [0,5].$

\subsubsection{Discontinuities at $h=1$}
For $h =1$ and $V(x) = 1+\cos(2\pi x)$, we have
\[ M_{\theta} = 3-2 \cos \theta_1  + \cos \theta_2 \]
which implies that $\bigcup_{\theta_1 \in \mathbb R}  \Spec(M_{\theta_1,0}) = [2,6]$ while $\bigcup_{\theta \in \mathbb R^2} M_{\theta} = [0,6].$

For $V(x)=\cos^4(2\pi x)$ we have 
\[ M_{\theta} = 2-2 \cos \theta_1  + \cos^4 \theta_2 \]
which implies that $\bigcup_{\theta_1 \in \mathbb R}  \Spec(M_{\theta_1,0}) = [1,5]$ while $\bigcup_{\theta \in \mathbb R^2} M_{\theta} = [0,5].$

\begin{figure}
{\begin{tikzpicture}
\node at (-6,0) {\includegraphics[width=14cm]{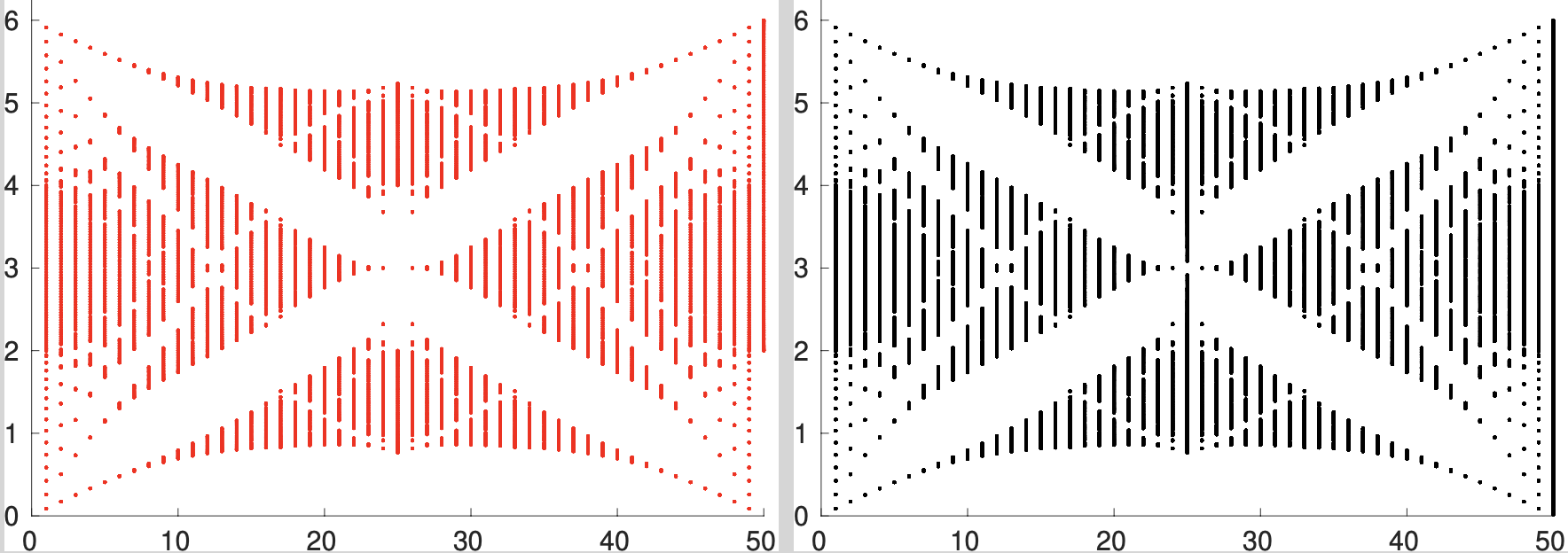}};
\draw [-{Stealth[length=3mm, width=2mm]}](-0.8+1,2.7)--(-0.8-1.7,0.05);
\draw [-{Stealth[length=3mm, width=2mm]}] (-0.8+1,2.7) -- (-0.8+1.7,-2.0);
\draw [-{Stealth[length=3mm, width=2mm]}](-0.8-6,2.7)--(-0.8-1.7-7,0.05);
\draw [-{Stealth[length=3mm, width=2mm]}] (-0.8-6,2.7) -- (-0.8+1.7-7,-2.0);
     \node at  (-9.5-3.5,2.7){$\Spec(P_{\rm d}(h)) $};
     \node at (-3.5+1-3.5,2.7){$\Sigma_{h} $};
     \node at  (-9.5,2.7){$V(x) = 1+\cos(2\pi x)$};
     \node at (-3.5+1,2.7){$V(x) = 1+\cos(2\pi x) $};
     \node at (-3.5+1,-2.7){$50 h $};
     \node at (-9.5,-2.7){$50 h $};
\end{tikzpicture}}
\caption{\label{fig:Hofstadter1} Hofstadter butterfly $h= \frac{p}{50} \mapsto \Spec(P_{\rm d}(h)) $ (left) and $h= \frac{p}{50} \mapsto \Sigma_{h}$ (right) for $V(x)=1+\cos(2\pi x)$. Discontinuities at $p=25,50$ are clearly visible on the left. Visible discrepancies between the two butterflies are indicated by arrows. }
\end{figure}

\begin{figure}
{\begin{tikzpicture}
\node at (-6,0) {\includegraphics[width=14cm]{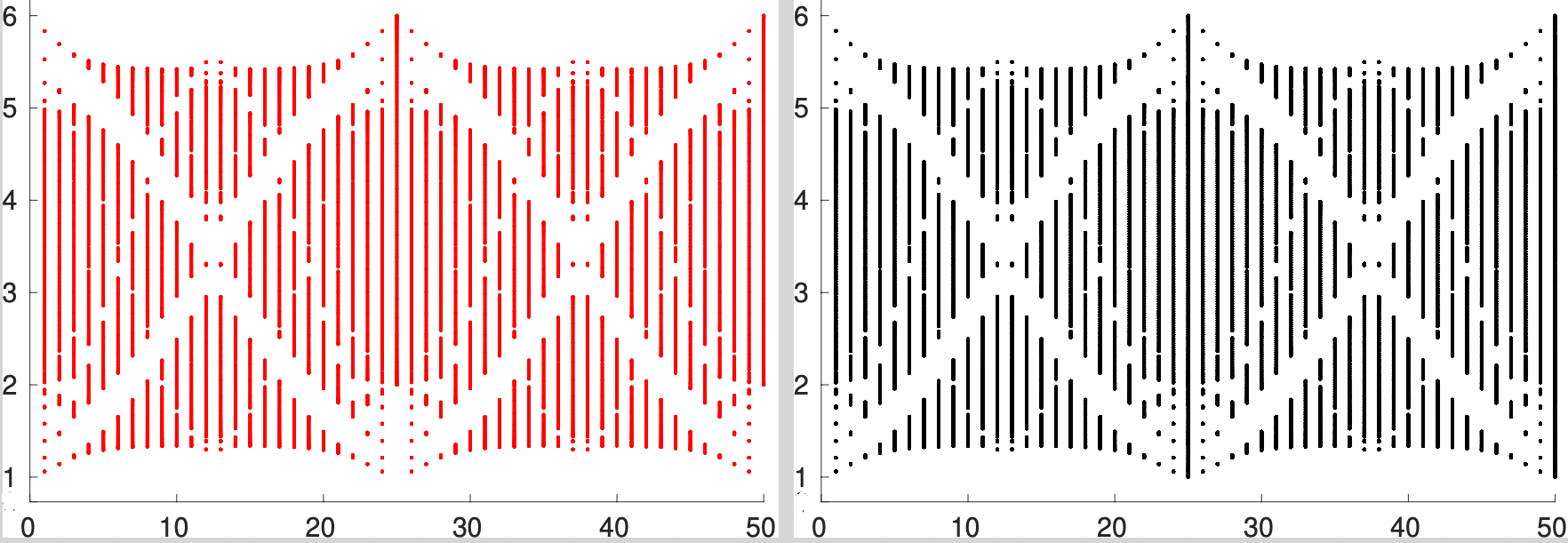}};
\draw [-{Stealth[length=3mm, width=2mm]}](-0.8+1,2.7)--(-0.8-1.7,-2.0);
\draw [-{Stealth[length=3mm, width=2mm]}] (-0.8+1,2.7) -- (-0.8+1.7,-2.0);
\draw [-{Stealth[length=3mm, width=2mm]}](-0.8-6,2.7)--(-0.8-1.7-7,-2.0);
\draw [-{Stealth[length=3mm, width=2mm]}] (-0.8-6,2.7) -- (-0.8+1.7-7,-2.0);
     \node at  (-9.5-3.5,2.7){$\Spec(P_{\rm d}(h)) $};
     \node at (-3.5+1-3.5,2.7){$\Sigma_{h} $};
     \node at  (-9.5,2.7){$V(x) = 1+\cos^4(2\pi x)$};
     \node at (-3.5+1,2.7){$V(x) = 1+\cos^4(2\pi x) $};
     \node at (-3.5+1,-2.7){$50 h $};
     \node at (-9.5,-2.7){$50 h$};
\end{tikzpicture}}
\caption{\label{fig:Hofstadter2} Hofstadter butterfly $h = \frac{p}{50} \mapsto \Spec(P_{\rm d}(h)) $ (left) and $h= \frac{p}{50} \mapsto \Sigma_{h}$ (right) for $V(x)=1+\cos^4(2\pi x)$. Discontinuities at $p=25,50$ are clearly visible on the left. Visible discrepancies between the two butterflies are indicated by arrows. }
\end{figure}

\section{The Bohr-Sommerfeld quantization condition}
\label{s:BS}
\begin{figure}
\includegraphics[width=13cm]{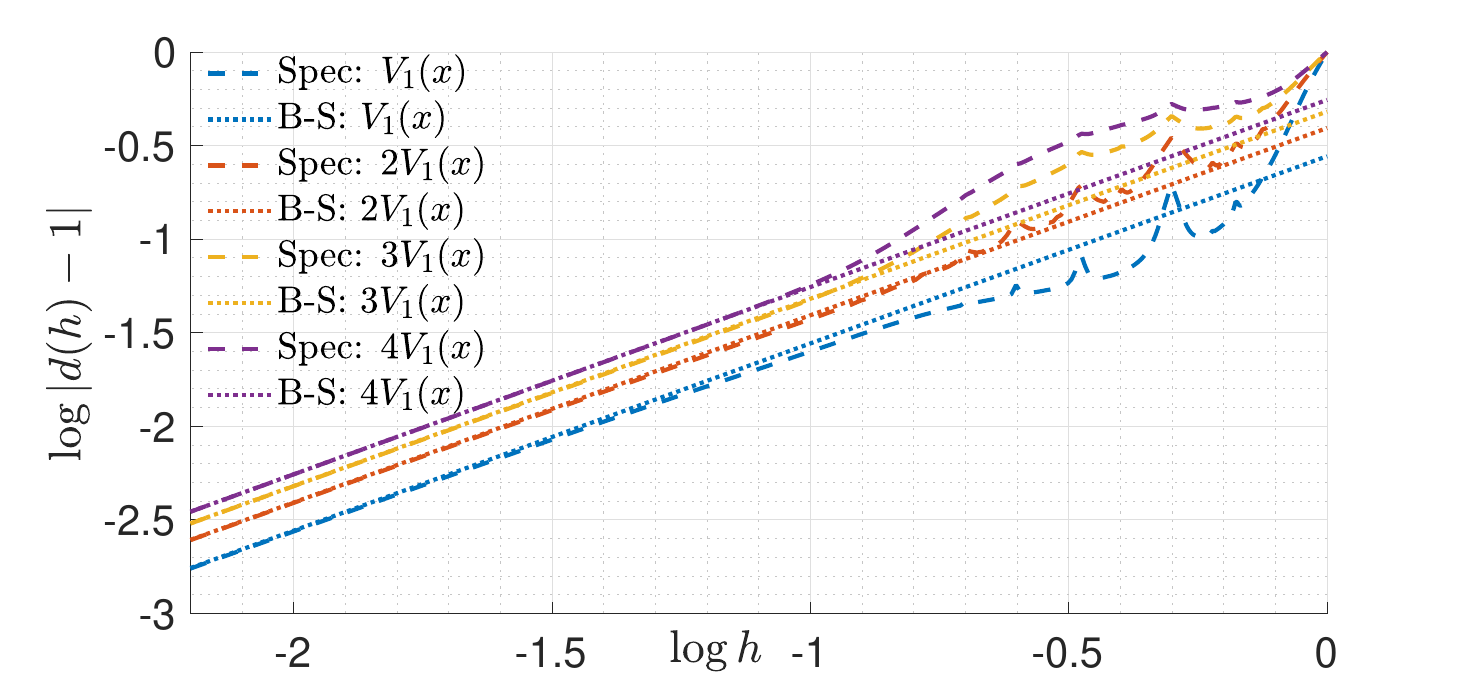}
\includegraphics[width=13cm]{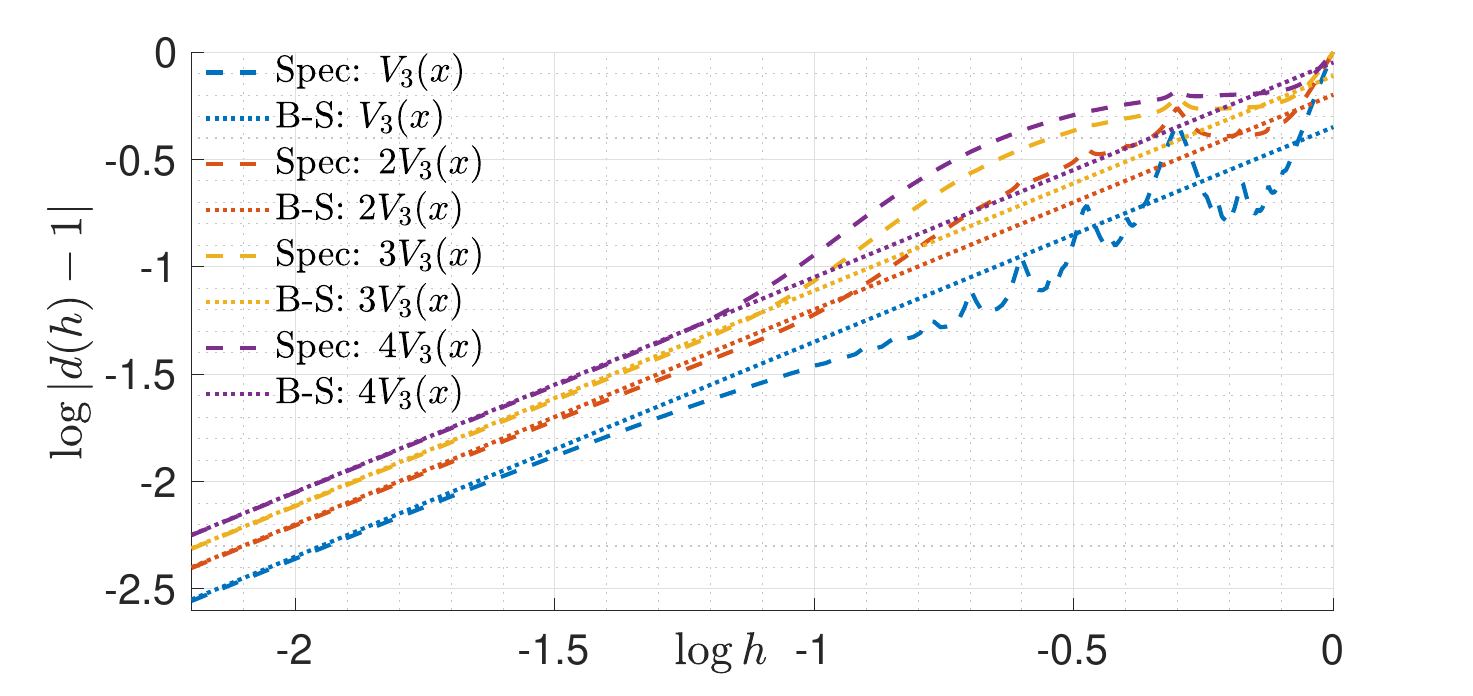}
\caption{\label{fig:dh} Linear convergence rate of $\vert d(h)-1\vert$ illustrated by comparison with leading term obtained from Bohr-Sommerfeld quantization condition in \eqref{eq:BSdh} to spectral computations for various multiples of the potentials $V_1$ (top) and $V_3$ (bottom), i.e. operators $p^{\rm w}(x,hD)=-h^2\Delta+\lambda V_j$ and $p^{\rm w}(x,hD)=2(1-\cos(hD))+\lambda V_{j}(x)$ and $\lambda \in \{1,2,3,4\}.$ }
\end{figure}
The distribution of eigenvalues of a symbol exhibiting a potential well can be expressed using a Bohr-Sommerfeld quantization condition. 
A rigorous approach to that condition 
was developed by Helffer-Robert \cite{HR84}. It allows us to compare our numerical computations of the bottom of the spectrum to the analytic expressions obtained from the quantization condition. We let $p^{\rm w}(x,hD)$ be either 
\begin{equation}\label{eq:comparisonops}
p^{\rm w}(x,hD)=-h^2\Delta+V_{0}(x)\quad \text{ or }\quad p^{\rm w}(x,hD)=2(1-\cos(hD))+V_{0}(x)
\end{equation}
(see~\eqref{eq:P1}) with $V_0$ as in \eqref{eq:V0}. We use an elegant presentation of higher order Bohr--Sommerfeld rules developed by Colin de Verdi\`ere \cite{cdv},
\begin{equation}
\label{eq:BS} 
2\pi (n+\tfrac{1}{2})h=S(E_n,h)=\sum_{j=0}^\infty h^{2j}S_{2j}(E_n),\quad n\in\mathbb N_0.
\end{equation}
It follows from \cite{HS88} (see also \cite{hz} for a recent treatment of a more general case) that for analytic potentials $V$ with non-degenerate wells, the Bohr--Sommerfeld rule is also valid for the bottom of the spectrum, $n=0$. The analyticity assumption is not essential: asymptotic formulas for the ground
state in the case of non degenerate minimum hold in all dimensions -- see \cite[Theorem 3.6, Theorem 4.23]{DiSj} and references given there. In the analytic case we can use the more computationally tractable Bohr--Sommerfeld rules but the expansions in terms of the potential are ultimately the same.

Hence, we restrict ourselves to the two potentials $V_1,V_3$ described on page \pageref{eq:fourpotentials}, as the quantization rule may not apply to $V_2$ (degenerate well) and $V_4$ (non-analytic potential with an infinitely degenerate well). % wiht an in respectively). 

The first two terms in \eqref{eq:BS} are given by 
\[ S_0(E)=\int_{\{p\le E\}} dx \,d \xi , \quad  S_2(E)=-\tfrac{1}{24}\partial_E^2\int_{\{p\le E\}}\operatorname{det}(p'')\,dx\,d\xi  . \]
The first term is the classical action ($ dx\, d\xi $ is the Lebesgue measure on $ \mathbb R^2 $). The 
second one comes from 
the proof of \cite[Theorem 2]{cdv}. For analytic $V_0$ such that 
\begin{equation}\label{eq:V}
V_0(x)=a_0x^2+a_1 x^3 + a_2x^4 + \mathcal O(x^5)
\end{equation}
near $x=0$ with $a_0>0$ (which holds for appropriate translations of $V_1,V_3$ with $a_0=2\pi^2$, $a_1=0$, $a_2=-2\pi^4/3$ and $a_0=3\sqrt 3 \pi^2$, $a_1=-2\pi^3$, $a_2=-3\sqrt 3 \pi^4$, respectively) we have 
%\begin{equation*}%\label{eq:S0}
% S_0 ( E ) = \partial_ES_0 (0)E + \tfrac12 \partial_E^2S_0 (0) E^2  + \mathcal O(E^3), \ \  S_2 ( E ) =  S_2 (0)  + \mathcal O( E ),
% \end{equation*}
%When $V_0$ has a non-degenerate well we have
%\begin{equation*}%\label{eq:S0}
% S_0 ( E ) = cE + c_1 E^2  + \mathcal O(E^3), \ \  S_2 ( E ) = c_2  + \mathcal O( E )  
% \end{equation*}
\begin{equation}\label{eq:S0}
 S_0 ( E ) = \pi a_0^{-1/2}E + \pi \alpha_1 E^2  + \mathcal O(E^3), \ \  S_2 ( E ) = \pi \alpha_2  + \mathcal O( E )  
 \end{equation}
for $\alpha_i$ specified in Table \ref{table:BS} for $V_1,V_3$ and $p$ in \eqref{eq:comparisonops}. Here, we used that the leading term of $S_0(E)$ is given only in terms of the leading order Taylor coefficient of the quadratic well potential. This fact and further details on how to obtain the coefficients can be found in the appendix.
%\begin{equation*}
%E_0 ( h ) =\frac{\pi}{c} h - \frac{1}{c} \Big( c_1\Big(\frac{\pi}{c}\Big)^2 +c_2 \Big) h^2 + \mathcal O (h^3) .
%\end{equation*}
%\begin{equation*}
%\label{eq:E0mod}  E_0 ( h ) = \sqrt {c_0} (h -  \pi^{-1} (c_0 c_1 + c_2 ) h^2 )+ \mathcal O(h^3 ). 
%\end{equation*}
From the expansion of the action \eqref{eq:S0} and the quantization condition \eqref{eq:BS}, we immediately obtain
\begin{equation}
\label{eq:E0mod}  E_0 ( h ) = a_0^{1/2} h -  a_0^{1/2} (a_0 \alpha_1 + \alpha_2 ) h^2 + \mathcal O(h^3 ). 
\end{equation}
As seen in Table \ref{table:BS}, there is a difference in $\alpha_i$ depending on the choice of $p$ in \eqref{eq:comparisonops}. This difference only depends on the leading term in \eqref{eq:V}, and this is true for any such potential, see Lemmas \ref{lem:S0} and \ref{lem:S2} in the appendix. We can relate this to $d(h)$ in \eqref{eq:comp} by inserting the semiclassical approximation for \eqref{eq:E0mod} which gives (see the remarks after \eqref{eq:BS})
\begin{prop}
Suppose that  $V \in C^\infty ( \mathbb R; \mathbb R ) $ is $\mathbb Z$-periodic, 
$ V $ has a single minimum, $x_0 $, in a fundamental domain and $V(x)=a_0(x-x_0)^2 + \mathcal O(\vert x-x_0\vert^3),$  $a_0 >0$. Then
\begin{equation}
\label{eq:BSdh}
d(h)= 1 - \frac{\sqrt{a_0}}{16} h + \mathcal O(h^2).
\end{equation}
\end{prop}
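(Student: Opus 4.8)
The plan is to reduce $d(h)$ to a ratio of ground state energies of the two model operators in \eqref{eq:comparisonops} and then extract the linear term from the Bohr--Sommerfeld expansion \eqref{eq:E0mod}.

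First I would collect the reductions already in place. Since the well sits inside $\Omega$, where $\psi\equiv1$, we have $V_0=V$ near $x_0$, so $V_0(x)=a_0(x-x_0)^2+\mathcal O(|x-x_0|^3)$ with the same $a_0$. By the unitary equivalences of \S\ref{sec:2}, Lemma~\ref{l:local}, and the test-function argument at the end of \S3, $\min\Spec(P_{\rm d}(h))=\min\Spec(H_0)+\mathcal O(h^\infty)$ and $\min\Spec(P_{\rm c}(h))=\min\Spec(P_0)+\mathcal O(h^\infty)$; and by \eqref{eq:minH0}, $\min\Spec(H_0)=\min\Spec(P_1)+\mathcal O(h^\infty)$ with $P_1$ as in \eqref{eq:P1}. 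The symbol $A(\xi)$ of $P_1$ coincides with $2(1-\cos\xi)$ where $\psi_0\equiv1$, i.e.~in a neighbourhood of $\xi=0$, and by Lemma~\ref{l:loc} the ground state is localized there, so $P_1$ has the same Bohr--Sommerfeld expansion as $p^{\rm w}=2(1-\cos(hD))+V_0$. Because $V$ has a single non-degenerate well, \eqref{eq:BS} holds for $n=0$ (as recalled after \eqref{eq:BS}, via \cite{HS88}, \cite{DiSj}), so $\min\Spec(P_0)$ and $\min\Spec(P_1)$ are given by \eqref{eq:E0mod} with coefficients $\alpha_i^{\rm c}$ and $\alpha_i^{\rm d}$ attached to the two choices of $p^{\rm w}$; Lemmas~\ref{l:ground} and \ref{l:groundP1} give these minima $\ge h^2/C$, so the $\mathcal O(h^\infty)$ errors are harmless in the quotient.

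Writing $E_0^\bullet(h)=a_0^{1/2}h\bigl(1-(a_0\alpha_1^\bullet+\alpha_2^\bullet)h+\mathcal O(h^2)\bigr)$ from \eqref{eq:E0mod} and taking the ratio,
\[ d(h)=\frac{E_0^{\rm d}(h)}{E_0^{\rm c}(h)}\bigl(1+\mathcal O(h^\infty)\bigr)=1-\Delta\,h+\mathcal O(h^2),\qquad \Delta:=(a_0\alpha_1^{\rm d}+\alpha_2^{\rm d})-(a_0\alpha_1^{\rm c}+\alpha_2^{\rm c}). \]
Here the remainder $\mathcal O(h^2)$ is genuine because \eqref{eq:BS} runs over even powers of $h$ only, which forces the error in \eqref{eq:E0mod} to be $\mathcal O(h^3)$. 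By Lemmas~\ref{lem:S0} and \ref{lem:S2} of the appendix the difference $\Delta$ depends only on the leading coefficient $a_0$ in \eqref{eq:V}, so I would evaluate it in the harmonic model $V_0(x)=a_0x^2$. There $\min\Spec(-h^2\Delta+a_0x^2)=\sqrt{a_0}\,h$ exactly, giving $a_0\alpha_1^{\rm c}+\alpha_2^{\rm c}=0$, while first-order perturbation theory for $2(1-\cos(hD))-(hD)^2=-\tfrac1{12}(hD)^4+\mathcal O((hD)^6)$ on the Gaussian ground state $\psi_0$, using the virial identity $\langle(hD)^2\rangle_{\psi_0}=\tfrac12\sqrt{a_0}\,h$ and the Gaussian moment relation $\langle(hD)^4\rangle_{\psi_0}=3\langle(hD)^2\rangle_{\psi_0}^2$, gives $\min\Spec(2(1-\cos(hD))+a_0x^2)=\sqrt{a_0}\,h-\tfrac{a_0}{16}h^2+\mathcal O(h^3)$, i.e.~$a_0\alpha_1^{\rm d}+\alpha_2^{\rm d}=\sqrt{a_0}/16$. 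Hence $\Delta=\sqrt{a_0}/16$ and \eqref{eq:BSdh} follows.

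The main obstacle is the evaluation of the constant $1/16$. There are two complementary routes: the intrinsic one, through the appendix computation of $S_0,S_2$ for both symbols near a quadratic well (Lemmas~\ref{lem:S0}--\ref{lem:S2}), which isolates precisely the part of the $h^2$-coefficient that survives the difference; and the perturbative one sketched above, which is the most transparent since the harmonic oscillator ground state is explicit and the only nontrivial input is $\langle(hD)^4\rangle=3\langle(hD)^2\rangle^2$ for Gaussians (the perturbation being of order $(hD)^4$ on a state in the domain, so first-order perturbation theory applies with $\mathcal O(h^3)$ remainder). A secondary point to be careful about is that $P_1$ carries the compactly supported symbol $A(\xi)$ rather than $2(1-\cos\xi)$; this is immaterial because the two agree in a neighbourhood of the well $\{\xi=0,\ x=x_0\}$, which is all that enters either the Bohr--Sommerfeld coefficients $S_{2j}$ or the localized first-order expectation.
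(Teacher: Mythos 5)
Your proof is correct and largely mirrors the paper's route: reduce both sides to the model operators $p^{\mathrm w}=-h^2\Delta+V_0$ and $p^{\mathrm w}=2(1-\cos(hD))+V_0$, invoke the Bohr--Sommerfeld rule \eqref{eq:BS} at $n=0$, insert \eqref{eq:E0mod}, and take the ratio; the $\mathcal O(h^\infty)$ transitions (via Lemma~\ref{l:local}, \eqref{eq:minH0}, and the $\lambda(h)\geq h^2/C$ lower bounds of Lemmas~\ref{l:ground}, \ref{l:groundP1}) are exactly as you describe, and your remark that the remainder is $\mathcal O(h^3)$ because \eqref{eq:BS} is an expansion in $h^2$ and $S_0(E),\ S_2(E)$ are smooth in $E$ is the right justification.

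Where you genuinely diverge from the paper is in evaluating the constant $\Delta=(a_0\alpha_1^{\mathrm d}+\alpha_2^{\mathrm d})-(a_0\alpha_1^{\mathrm c}+\alpha_2^{\mathrm c})$. The paper reads it off from the explicit action computations in Lemmas~\ref{lem:S0}--\ref{lem:S2}: the $E^2$-coefficients of $S_0$ and the $E^0$-coefficients of $S_2$ differ by $\tfrac{1}{32}a_0^{-1/2}\pi$ and $\tfrac{1}{32}a_0^{1/2}\pi$ respectively, so $\Delta=\tfrac{1}{16}\sqrt{a_0}$. You instead note (citing those same lemmas) that $\Delta$ depends only on $a_0$, specialize to the pure quadratic $V_0=a_0x^2$ where the continuous ground state energy is exactly $\sqrt{a_0}\,h$ (so all $\alpha^{\mathrm c}_j=0$), and then extract the discrete $h^2$-coefficient by first-order perturbation theory with perturbation $2(1-\cos(hD))-(hD)^2=-\tfrac1{12}(hD)^4+\cdots$, using the virial relation $\langle(hD)^2\rangle=\tfrac12\sqrt{a_0}\,h$ and the Gaussian fourth moment $\langle(hD)^4\rangle=3\langle(hD)^2\rangle^2$. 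This gives $-\tfrac1{12}\cdot 3\cdot\tfrac{a_0}{4}h^2=-\tfrac{a_0}{16}h^2$ and hence $\Delta=\sqrt{a_0}/16$, agreeing with the paper. Your route is more transparent (the $1/16$ visibly comes from the $\xi^4/12$ term of $2(1-\cos\xi)$ and the Gaussian moment $3$), at the modest cost of needing to justify that first-order perturbation theory is rigorous here: the second-order correction is $\mathcal O(\|W\psi_0\|^2/\mathrm{gap})=\mathcal O(h^4/h)=\mathcal O(h^3)$ by Gaussian decay of $\hat\psi_0$, which you gesture at but could state. Both approaches still need Lemmas~\ref{lem:S0}--\ref{lem:S2} for the observation that $\Delta$ is independent of the higher Taylor coefficients $a_1,a_2$, so your route is a different evaluation of the constant, not a way around the appendix.
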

We illustrate the effectiveness of \eqref{eq:BSdh} in Figure \ref{fig:dh}. In Figures \ref{fig:BSc} and  \ref{fig:BSd}, we illustrate the fast convergence of the Bohr-Sommerfeld rule to compute $E_0(h)$ in \eqref{eq:E0mod}. In addition, in Figure \ref{fig:BSc}, we show asymptotics of $\min \Spec(P_{\rm c}(h))$ and $\min \Spec(P_{\rm d}(h))$ indicating the $h^{2k/(k+1)}$ behaviour,  where $2k$ is the order of vanishing of $ V $ at the bottom of the well. (This follows from a simple rescaling argument.) The scaling is visible for relatively large $h>0$ for analytic potentials $V_1,V_2,V_3$, but less so for the non-analytic potential $V_4.$

\begin{figure}
\begin{centering}
\includegraphics[width=7.6cm]{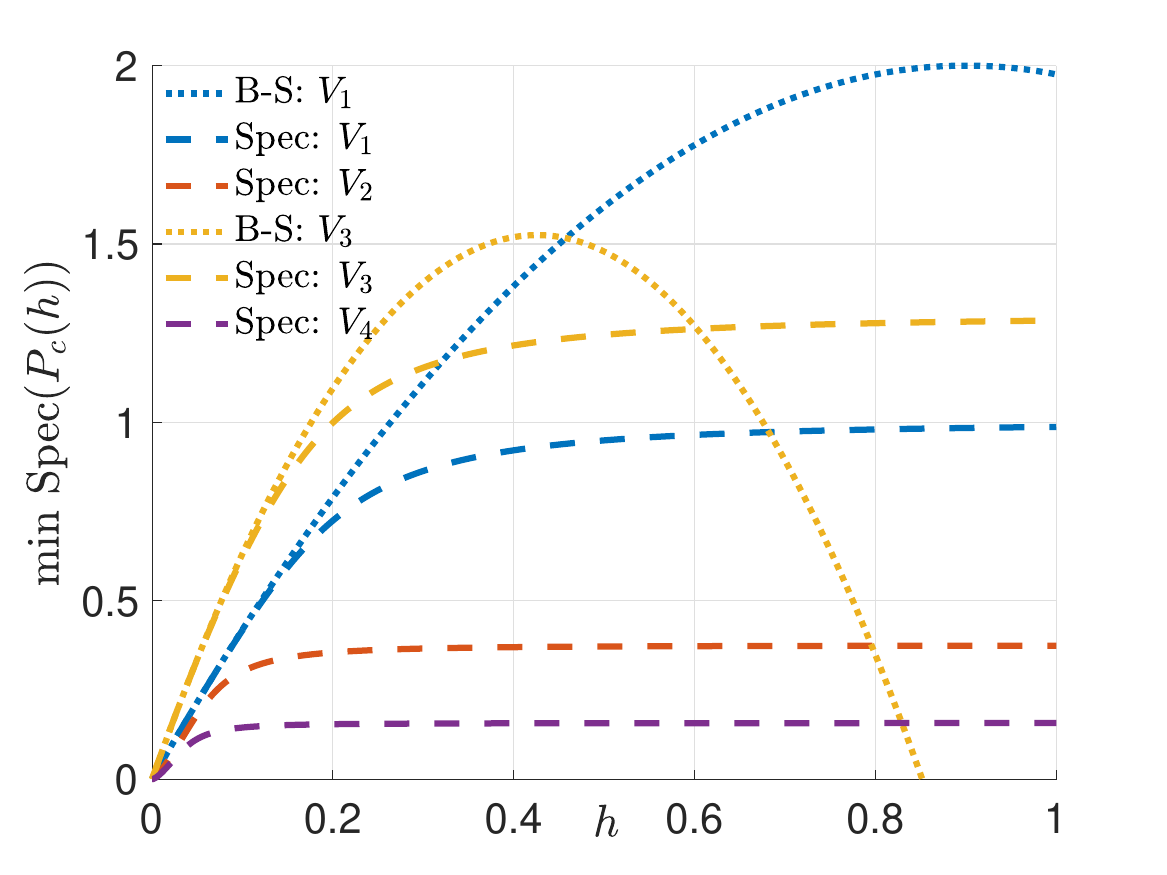}
\includegraphics[width=7.6cm]{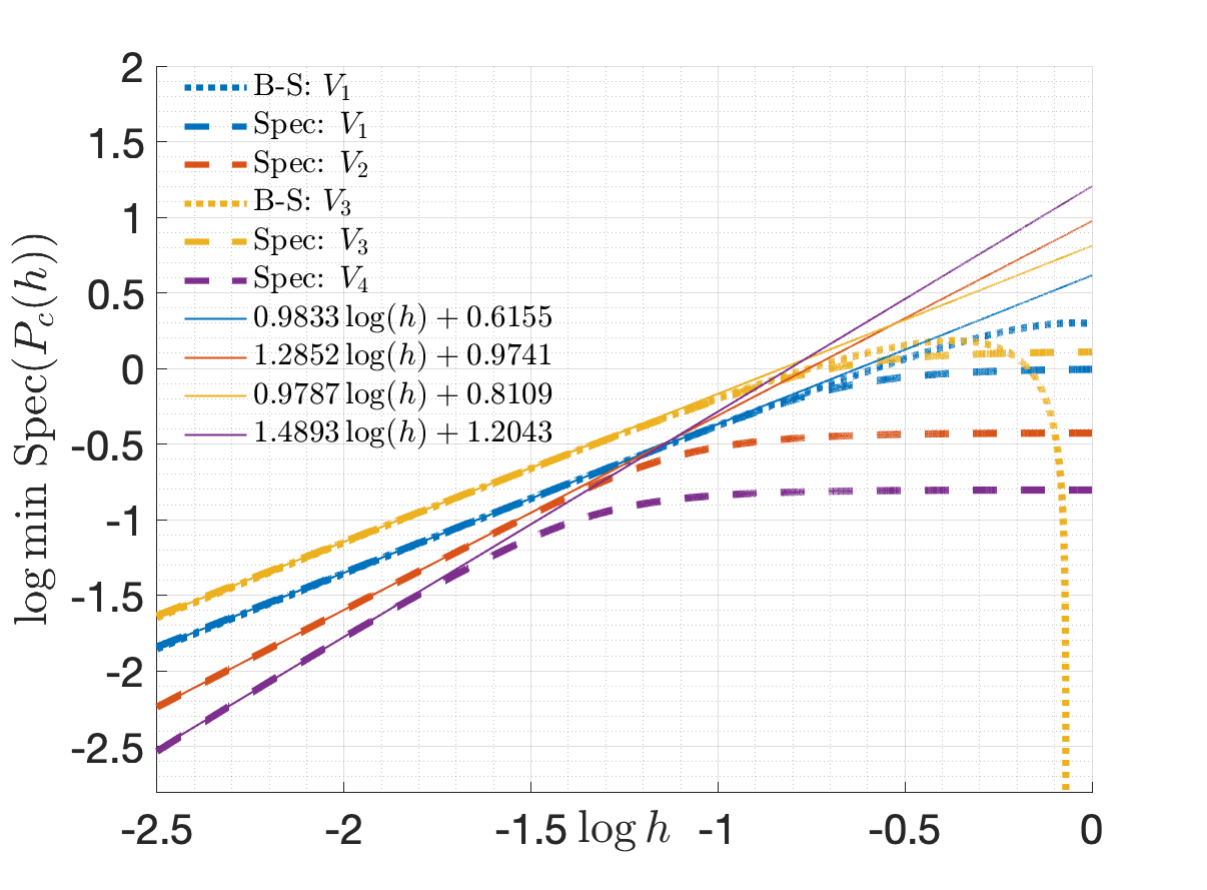}
\end{centering}
\caption{Comparison of $\min(\Spec(P_{\rm c}(h)))$ computed using direct spectral (Spec) computations and using the first two terms of the Bohr-Sommerfeld (B-S) condition, see \eqref{eq:E0mod}, for potentials $V_1,V_3$. 
We also include a linear fit at log-scale. It confirms the linear dependence on $ h $ 
for potentials with non-degenerate minima and suggests the expected $ h^{\frac43} $ behaviour
for the potential with the fourth order minimum and $ h^{\frac32} $ for $ V_4 $ (while a larger power is 
expected from scaling). 
\label{fig:BSc}}
\end{figure}

\begin{figure}
\begin{centering}
\includegraphics[width=7.6cm]{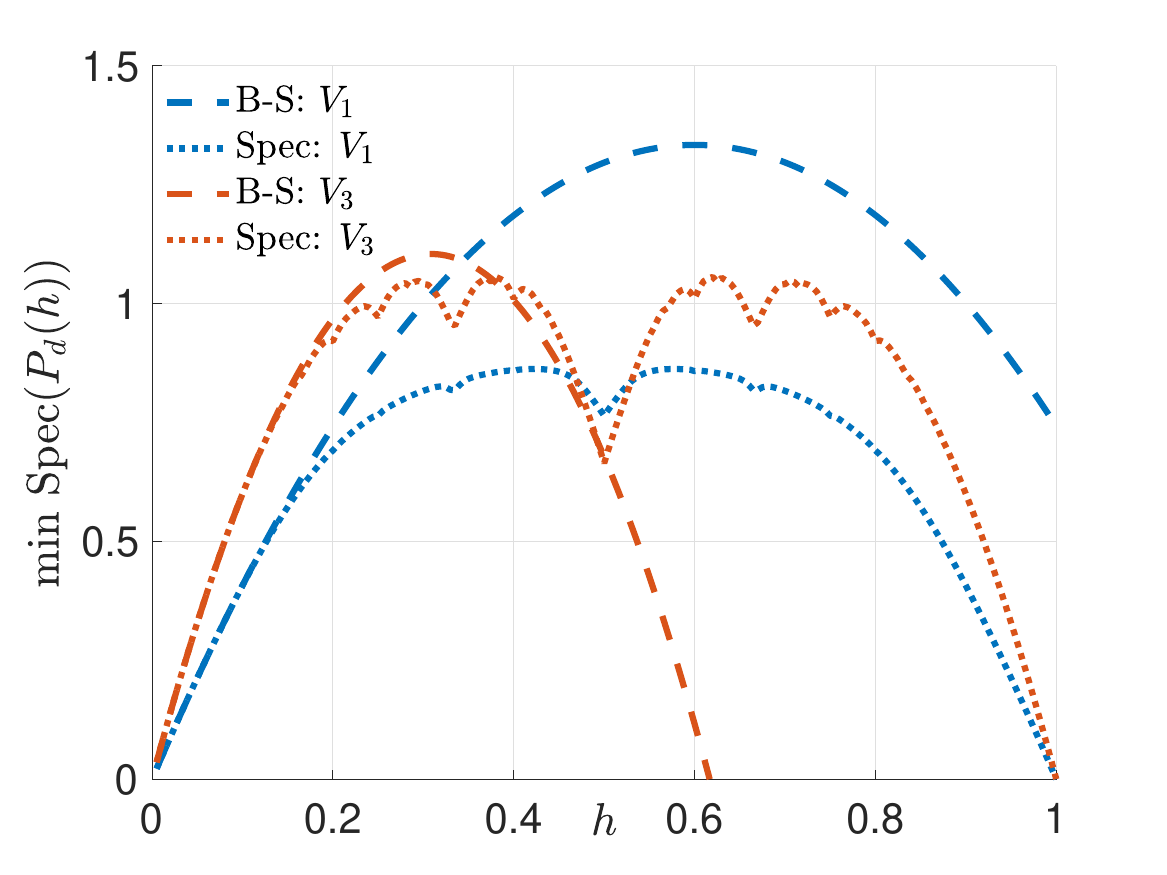}
\includegraphics[width=7.6cm]{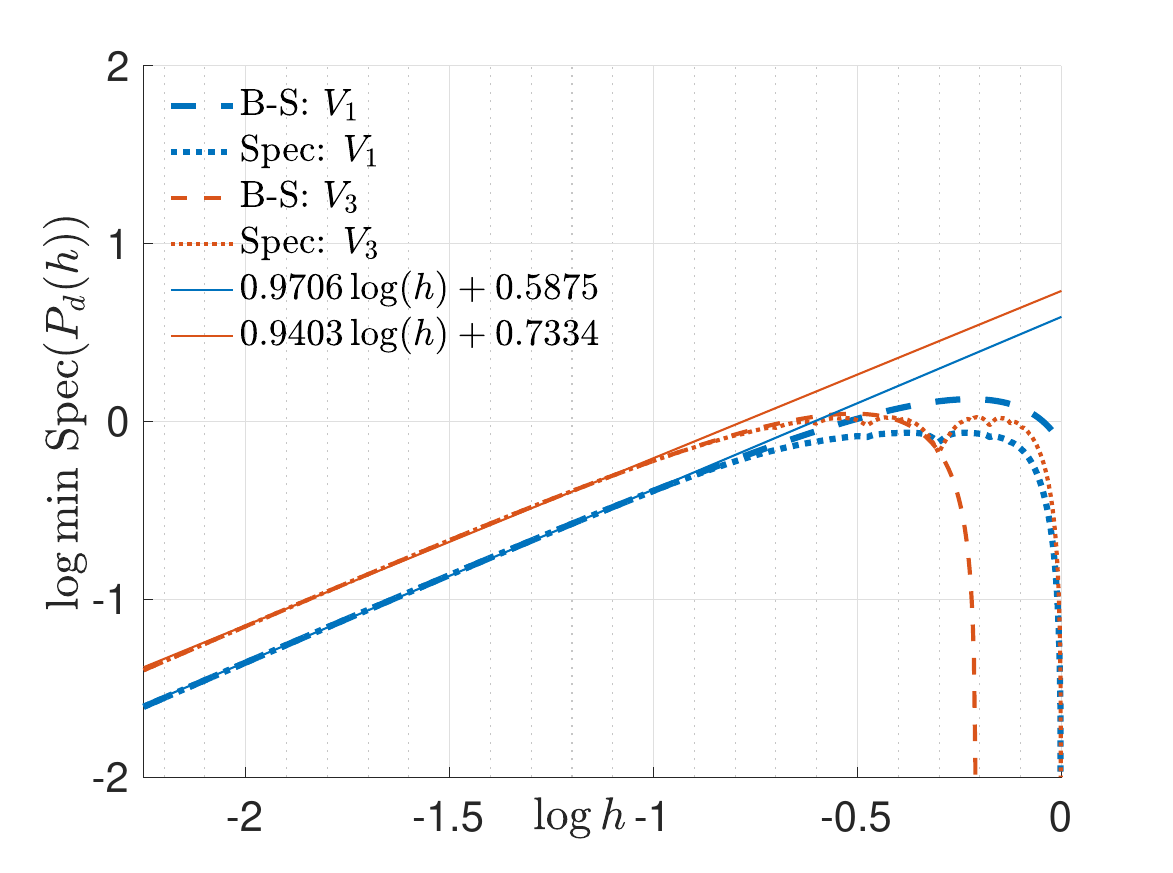}
\end{centering}
\caption{Comparison of $\min(\Spec(P_{\rm d}(h)))$ computed using direct spectral (Spec) computations and using the first two terms of the Bohr-Sommerfeld (B-S) condition, see \eqref{eq:E0mod}, for potentials $V_1,V_3$ with linear fit at log-scale. \label{fig:BSd}}
\end{figure}
\begin{center}
\begin{table}
\caption{\label{table:BS}Coefficients $\alpha_i$ appearing in the asymptotic expansion of the action terms $S_0, S_2$ in \eqref{eq:S0}.}
\begin{tabular}{ |p{1.5cm}|p{2cm}|p{2cm}| p{3cm}|p{3cm}| }
\hline
\multicolumn{5}{|c|}{Action coefficients} \\
\hline
\text{Symbol} & $\xi^2 + V_1$ & $\xi^2 + V_3 $ & $2(1-\cos(\xi)) + V_1$ & $2(1-\cos(\xi)) + V_3$ \\
\hline
$a_0$ & $2\pi^2$ & $3\sqrt 3 \pi^2$ & $2\pi^2$ & $3\sqrt 3\pi^2$ \\
$\alpha_1$ &  $\frac{1}{16\pi\sqrt{2}}$  & $\frac{4}{81\pi \cdot  3^{1/4}}$ & $\frac{1}{16\pi\sqrt{2}}+\frac{1}{32}a_0^{-1/2}$ & $\frac{4}{81 \pi\cdot  3^{1/4}} + \frac{1}{32}a_0^{-1/2}$ \\
$\alpha_2$  &$ \frac{\pi}{8 \sqrt{2}}$ & $\frac{11\pi}{27 \cdot 3^{3/4}}$ & $\frac{\pi}{8 \sqrt{2}}+\tfrac1{32}a_0^{1/2}$ & $\frac{11\pi}{27 \cdot 3^{3/4}}+\tfrac1{32}a_0^{1/2}$ \\
\hline
\end{tabular}
\end{table}
\end{center}

\section*{Appendix: Computation of Bohr-Sommerfeld coefficients}
\renewcommand{\theequation}{A.\arabic{equation}}
\refstepcounter{section}
\renewcommand{\thesection}{A}
\setcounter{equation}{0}

%\begin{appendix}
%\section{Computation of Bohr-Sommerfeld coefficients}
%\label{appendix:BS}

Let $V(x)=\sum_{n \ge 2} a_{n-2} x^n$ with $a_n \in \RR$ and $a_0 > 0.$ We then solve $V(x(y))=y^2$ recursively by looking for an asymptotic expansion $x(y)= \sum_{n \ge 1} \beta_n y^n$ with $\beta_1 > 0$ such that $x_m(y)=\sum_{n =1}^m \beta_n y^n$ satisfies $ \vert V(x_m(y))- y^2 \vert= \mathcal O(\vert y \vert^{m+2}).$ One directly verifies $\beta_1 = a_0^{-1/2}.$ Assume now that we have found $x_m$, then for $x_{m+1}$ we find 
\[ \begin{split} V(x_{m+1}(y)) &= \sum_{n \ge 2} a_n (\beta_{m+1} y^{m+1} + x_m(y))^n \\
&= \sum_{n \ge 2} a_n \sum_{k=0}^n \binom{n}{k} (\beta_{m+1} y^{m+1})^{n-k} x_m(y)^{k} \\ 
&=y^2 +  \sum_{n \ge 2} a_n \sum_{k=0}^{n-1} \binom{n}{k} (\beta_{m+1} y^{m+1})^{n-k} x_m(y)^{k} +  \sum_{n \ge m+2} c_{n,m} y^n
\end{split}\]
for some coefficients $c_{n,m},$ where we used the induction in the last step. For this to satisfy $ \vert V(x_{m+1}(y))- y^2 \vert= \mathcal O(\vert y \vert^{m+3})$ we must eliminate the terms of order $y^{m+2}$ on the right-hand side. These are produced in the middle sum when $n=2$ and $k=1$, and in the right sum when $n=m+2$, so they are eliminated by setting $\beta_{m+1}   := -c_{m+2,m}/\beta_1$.
%
%We thus see that we can truncate all terms of order $y^{m+1}$ by the term with $k=1$ and $n=2$, by setting $\beta_{m+1}   := -\frac{c_{m+2,m}}{\beta_1}.$
%
This way we find for the positive branch of $V^{-1}(y^2)$ and asymptotic formula 
\begin{equation*}
\begin{aligned}
V^{-1}(y^2)&=a_0^{-1/2}y-\tfrac12a_0^{-2}a_1y^2+(\tfrac58 a_0^{-7/2}a_1^2-\tfrac12 a_0^{-5/2}a_2)y^3\\
&\quad+(-a_0^{-5}a_1^3+\tfrac32a_0^{-4}a_1a_2-\tfrac12a_0^{-3}a_3)y^4+\mathcal O(y^5).
\end{aligned}
\end{equation*}
From this expression, we readily obtain the following result.

\begin{lemm}\label{lem:generalV}
Let $V$ be real analytic such that
\begin{equation*}%\label{eq:generalV}
V(x)=a_0x^2+a_1x^3+\ldots
\end{equation*}
with $a_0 >0.$ For $x>0$ we set $y^2=V(x)$ and let $V^{-1}$ denote the positive branch of the inverse such that $x=V^{-1}(y^2)$, $V^{-1}(0)=0$, and
%$dx=\frac{2y}{V'(V^{-1}(y^2))}dy$. 
$dx/dy=2y/V'(V^{-1}(y^2))$. Then
$$
\frac{2y}{V'(V^{-1}(y^2))}=b_0+b_1y+b_2y^2+b_3y^3+\mathcal O(y^4),
$$
where
$$
\begin{gathered}
b_0=a_0^{-1/2},\quad
b_1=-a_0^{-2}a_1,\quad
b_2=\tfrac{15}{8}a_0^{-7/2}a_1^2-\tfrac32a_0^{-5/2}a_2\\
b_3=-4a_0^{-5}a_1^3+6a_0^{-4}a_1a_2-2a_0^{-3}a_3.
\end{gathered}
$$
\end{lemm}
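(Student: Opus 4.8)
The quantity to be expanded is, as recorded in the statement itself, nothing but $dx/dy$ where $x(y):=V^{-1}(y^2)$: differentiating $V(x(y))=y^2$ in $y$ gives $V'(x(y))\,x'(y)=2y$, hence $x'(y)=2y/V'(V^{-1}(y^2))$ for small $y\neq0$. So the plan is simply to differentiate, term by term, the asymptotic expansion of $V^{-1}(y^2)$ already derived in the paragraph preceding the statement, namely
\[ V^{-1}(y^2) = a_0^{-1/2}y - \tfrac12 a_0^{-2}a_1 y^2 + \big(\tfrac58 a_0^{-7/2}a_1^2 - \tfrac12 a_0^{-5/2}a_2\big)y^3 + \big(-a_0^{-5}a_1^3 + \tfrac32 a_0^{-4}a_1 a_2 - \tfrac12 a_0^{-3}a_3\big)y^4 + \mathcal O(y^5). \]

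Before doing so I would make sure this manipulation is legitimate, i.e.\ that $x(y)$ is genuinely analytic near $y=0$ rather than carrying only a formal expansion, since one cannot differentiate an asymptotic expansion unconditionally. For that, write $V(x)=x^2W(x)$ with $W$ real analytic near $0$ and $W(0)=a_0>0$ (a removable singularity, as $V$ vanishes to second order). Then for $x>0$ small, $y=x\sqrt{W(x)}=:\Phi(x)$ with $\Phi$ real analytic and $\Phi'(0)=\sqrt{a_0}\neq0$; by the analytic inverse function theorem $\Phi$ has a real analytic local inverse, and $x(y)=V^{-1}(y^2)=\Phi^{-1}(y)$. In particular $x$ is analytic with $x'(0)=a_0^{-1/2}$, which also shows that the apparent singularity of $2y/V'(V^{-1}(y^2))$ at $y=0$ is removable, and that the displayed expansion is a convergent Taylor expansion which may be differentiated term by term, the remainder becoming $\mathcal O(y^4)$.

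Differentiating then yields $2y/V'(V^{-1}(y^2))=x'(y)=b_0+b_1y+b_2y^2+b_3y^3+\mathcal O(y^4)$ with $b_0=a_0^{-1/2}$, $b_1=2\cdot(-\tfrac12 a_0^{-2}a_1)=-a_0^{-2}a_1$, $b_2=3\cdot(\tfrac58 a_0^{-7/2}a_1^2-\tfrac12 a_0^{-5/2}a_2)=\tfrac{15}{8}a_0^{-7/2}a_1^2-\tfrac32 a_0^{-5/2}a_2$, and $b_3=4\cdot(-a_0^{-5}a_1^3+\tfrac32 a_0^{-4}a_1 a_2-\tfrac12 a_0^{-3}a_3)=-4a_0^{-5}a_1^3+6a_0^{-4}a_1 a_2-2a_0^{-3}a_3$, which are exactly the asserted coefficients. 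There is no real obstacle here: the only step deserving a word of care is the analyticity of $x(y)$, handled by the factorization $V=x^2W$ above; everything else is the bookkeeping of multiplying each Taylor coefficient by its exponent.
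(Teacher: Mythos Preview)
Your proof is correct and follows exactly the paper's approach: the paper derives the expansion of $V^{-1}(y^2)$ in the paragraph preceding the lemma and then simply says ``From this expression, we readily obtain the following result,'' which is precisely the term-by-term differentiation you carry out. Your extra paragraph justifying the genuine analyticity of $x(y)$ via $V(x)=x^2W(x)$ and the analytic inverse function theorem is a welcome addition that the paper leaves implicit.
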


We use this result to compute $S_0(E)$:

\begin{lemm}\label{lem:S0}
Let $p(x,\xi)=A(\xi)+V(x)$ with $V$ as in Lemma \ref{lem:generalV}, and $A(\xi)=\xi^2$ or $A(\xi)=2(1-\cos\xi)$. Let $S_0(E)=\int_{\{p\le E\}}dx\,d\xi$. If $A(\xi)=\xi^2$ then
\begin{equation}\label{eq:S0xi2}
S_0(E)
%=b_0\pi E+\tfrac{1}4b_2\pi E^2+\mathcal O(E^3)
=a_0^{-1/2}\pi E+\tfrac14(\tfrac{15}8a_0^{-7/2}a_1^2-\tfrac32a_0^{-5/2}a_2)\pi E^2+\mathcal O(E^3).
\end{equation}
If $A(\xi)=2(1-\cos\xi)$ then
\begin{equation}\label{eq:S0cos}
S_0(E)
%=b_0\pi E+\tfrac{1}4(b_2+\tfrac18b_0)\pi E^2+\mathcal O(E^3)
=a_0^{-1/2}\pi E+\tfrac14(\tfrac{15}8a_0^{-7/2}a_1^2-\tfrac32a_0^{-5/2}a_2+\tfrac18a_0^{-1/2})\pi E^2+\mathcal O(E^3).
\end{equation}
\end{lemm}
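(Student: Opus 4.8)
The plan is to evaluate the phase-space volume $S_0(E)$ as an iterated integral, carrying out the $\xi$-integration first. Since $V$ (as in Lemma \ref{lem:generalV}) and both choices of $A$ attain their minimum at the origin, we work with the connected component of $\{p\le E\}$ containing $(0,0)$, which is compact for $E$ small. By Fubini,
\[ S_0(E)=\int_{\{V(x)\le E\}} M\bigl(E-V(x)\bigr)\,dx,\qquad M(c):=\bigl|\{\xi:A(\xi)\le c,\ |\xi|<\pi\}\bigr|, \]
and an elementary computation gives $M(c)=2c^{1/2}$ when $A(\xi)=\xi^2$, while $M(c)=2\arccos(1-c/2)=2c^{1/2}+\tfrac1{12}c^{3/2}+\mathcal O(c^{5/2})$ when $A(\xi)=2(1-\cos\xi)$.

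For $A(\xi)=\xi^2$ I would split $\int_{\{V(x)\le E\}}$ at $x=0$ and, on each half (where $V$ is strictly monotone once $E$ is small enough, as $V''(0)=2a_0>0$), substitute $y=\sqrt{V(x)}$, so that $x=V^{-1}(y^2)$ on the right half; by Lemma \ref{lem:generalV}, $dx/dy=b_0+b_1y+b_2y^2+\mathcal O(y^3)$ there. Because $\widetilde V(u):=V(-u)$ has coefficients $(-1)^ja_j$, the analogous substitution on the left half produces the reflected series, and summing the two halves cancels the odd powers of $y$, leaving
\[ S_0(E)=4\int_0^{\sqrt E}(E-y^2)^{1/2}\bigl(b_0+b_2y^2+\mathcal O(y^4)\bigr)\,dy=\pi b_0E+\tfrac\pi4 b_2E^2+\mathcal O(E^3), \]
where $\int_0^{\sqrt E}(E-y^2)^{1/2}\,dy=\tfrac\pi4E$ and $\int_0^{\sqrt E}(E-y^2)^{1/2}y^2\,dy=\tfrac\pi{16}E^2$ (Beta integrals, e.g.\ via $y=\sqrt E\sin\theta$) and the remaining terms contribute $\mathcal O(E^3)$. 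Inserting $b_0=a_0^{-1/2}$ and $b_2=\tfrac{15}8a_0^{-7/2}a_1^2-\tfrac32a_0^{-5/2}a_2$ from Lemma \ref{lem:generalV} yields \eqref{eq:S0xi2}.

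For $A(\xi)=2(1-\cos\xi)$ the expansion of $M$ splits the integral as
\[ S_0(E)=\int_{\{V(x)\le E\}}2\bigl(E-V(x)\bigr)^{1/2}\,dx+\tfrac1{12}\int_{\{V(x)\le E\}}\bigl(E-V(x)\bigr)^{3/2}\,dx+\mathcal O(E^3), \]
and the first summand is exactly the value \eqref{eq:S0xi2} just obtained. The factor $\tfrac1{12}$ already makes the second summand of order $E^2$, so only the leading behaviour $V(x)\sim a_0x^2$ matters there; the same substitution gives $\int_{\{V\le E\}}(E-V)^{3/2}\,dx=2b_0\int_0^{\sqrt E}(E-y^2)^{3/2}\,dy+\mathcal O(E^3)=\tfrac{3\pi}8a_0^{-1/2}E^2+\mathcal O(E^3)$, so the correction equals $\tfrac\pi{32}a_0^{-1/2}E^2+\mathcal O(E^3)$. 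Adding this to \eqref{eq:S0xi2} and using $\tfrac14\cdot\tfrac18=\tfrac1{32}$ produces \eqref{eq:S0cos}.

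The one delicate point is the parity bookkeeping in the first step: the two half-well contributions separately involve the odd coefficients $b_1,b_3,\dots$ and hence half-integer powers of $E$, and these must cancel upon summing so that $S_0$ is a genuine power series in $E$. This follows from the symmetry $a_j\mapsto(-1)^ja_j$ under $x\mapsto -x$, transparent from the explicit expansion of $V^{-1}(y^2)$ displayed just before Lemma \ref{lem:generalV}, where $\beta_n\mapsto(-1)^{n+1}\beta_n$ and hence $b_n\mapsto(-1)^nb_n$. Everything else is the evaluation of elementary Beta integrals.
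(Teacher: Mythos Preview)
Your argument is correct and follows essentially the same route as the paper: split at $x=0$, substitute $y=\sqrt{V(x)}$ on each half using Lemma~\ref{lem:generalV}, and exploit the parity $\tilde b_j=(-1)^jb_j$ to kill the odd powers of $y$. The only difference is cosmetic: you integrate out $\xi$ first (expanding $M(c)=2\arccos(1-c/2)$ in the cosine case) and finish with one-dimensional Beta integrals, whereas the paper keeps the $\xi$-variable (substituting $\eta^2=2(1-\cos\xi)$ in the cosine case) and evaluates the resulting half-disk integrals in polar coordinates.
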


\begin{proof}
We start with the case $A(\xi)=\xi^2$. Then
$$
S_0(E)=\int_{\{p\le E\}}dx\,d\xi=\int_{\{\xi^2+V(x)\le E,\ x>0\}}dx\,d\xi+\int_{\{\xi^2+V(-x)\le E,\ x>0\}}dx\,d\xi,
$$
where the last identity follows by a change of variables. Now apply Lemma \ref{lem:generalV} to each of the two integrals on the right, noting that $\widetilde V(x):=V(-x)=\tilde a_0x^2+\tilde a_1x^3+\ldots$ for $x>0$ with $\tilde a_{2j}=a_{2j}$ and $\tilde a_{2j+1}=-a_{2j+1}$. Changing variables $y^2=V(x)$ and $y^2=\widetilde V(x)$, respectively, we obtain
$$
S_0(E)=\int_{\{\xi^2+y^2\le E,\ y\ge0\}}(b_0+\tilde b_0+(b_1+\tilde b_1)y+(b_2+\tilde b_2)y^2+(b_3+\tilde b_3)y^3+\mathcal O(y^4))\,dy\,d\xi$$
with $b_j$ as in the lemma, and with $\tilde b_j$ defined as $b_j$ but with $a_j$ replaced by $\tilde a_j$. Then $b_0+\tilde b_0=2b_0$, $b_2+\tilde b_2=2b_2$, and $b_1+\tilde b_1=b_3+\tilde b_3=0$. Changing to polar coordinates we get
\begin{equation*}%\label{eq:S0simple}
\begin{aligned}
S_0(E)&
%=\int_{\{\xi^2+y^2\le E,\ y\ge0\}}(2b_0+2b_2y^2+\mathcal O(y^4))\,dy\,d\xi\\&
=\int_0^{E^{1/2}}\int_{-\pi/2}^{\pi/2}(2b_0r+2b_2r^3\cos^2t)\,dt\,dr+\mathcal O(E^3) %\\&
=b_0\pi E+\tfrac{1}4b_2\pi E^2+\mathcal O(E^3).
\end{aligned}
\end{equation*}
Inserting the expressions for $b_j$ from Lemma \ref{lem:generalV} gives \eqref{eq:S0xi2}.

Next, consider the case when $A(\xi)=2(1-\cos \xi)$. Make the change of variables $\eta^2=2(1-\cos \xi)$ so that $\xi=\arccos(1-\eta^2/2)$ for $\xi\ge0$. This gives $d\xi=(1-\eta^2/4)^{-1/2}d\eta$, so $d\xi=(1+\eta^2/8+\mathcal O(\eta^4))\,d\eta$ by Taylor's formula. Hence,
$$
\begin{aligned}
S_0(E)&=\int_{\{\eta^2+y^2\le E,\ y\ge0\}}(2b_0+2b_2y^2+\mathcal O(y^4))(1+\eta^2/8+\mathcal O(\eta^4))\,dy\,d\eta
\\&=b_0\pi E+\tfrac{1}4(b_2+\tfrac18b_0)\pi E^2+\mathcal O(E^3).
\end{aligned}
$$
Inserting the expressions for $b_j$ from Lemma \ref{lem:generalV} gives \eqref{eq:S0cos}.
\end{proof}

\begin{lemm}\label{lem:S2}
Let $p(x,\xi)=A(\xi)+V(x)$ with $V$ as in Lemma \ref{lem:generalV}, and $A(\xi)=\xi^2$ or $A(\xi)=2(1-\cos\xi)$. Let $S_2(E)=-\frac1{24}\partial_E^2\int_{\{p\le E\}}A''(\xi)V''(x)\,dx\,d\xi$. If $A(\xi)=\xi^2$ then
\begin{equation}\label{eq:S2xi2}
S_2(E)=\tfrac{1}{24}(\tfrac{21}4a_0^{-5/2}a_1^2-9a_0^{-3/2}a_2)\pi+\mathcal O(E).
\end{equation}
If $A(\xi)=2(1-\cos\xi)$ then
\begin{equation}\label{eq:S2cos}
S_2(E)=\tfrac{1}{24}(\tfrac{21}4a_0^{-5/2}a_1^2-9a_0^{-3/2}a_2+\tfrac34a_0^{1/2})\pi+\mathcal O(E).
\end{equation}
\end{lemm}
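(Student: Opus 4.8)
The plan is to follow the proof of Lemma~\ref{lem:S0}. Since the Hessian of $p(x,\xi)=A(\xi)+V(x)$ is diagonal, $\det(p'')=A''(\xi)V''(x)$, so it suffices to expand
\[
I(E):=\int_{\{p\le E\}}A''(\xi)V''(x)\,dx\,d\xi=c_1E+c_2E^2+\mathcal O(E^3)
\]
to second order, after which $S_2(E)=-\tfrac1{24}\partial_E^2I(E)=-\tfrac1{12}c_2+\mathcal O(E)$. I would first carry out the $x$-integration exactly. For fixed $\xi$ with $A(\xi)\le E$ the slice $\{V(x)\le c\}$, $c:=E-A(\xi)$, is an interval $[x_-(c),x_+(c)]$ with $x_-(c)<0<x_+(c)$, and since $V'(0)=0$,
\[
\Phi(c):=\int_{\{V(x)\le c\}}V''(x)\,dx=V'(x_+(c))-V'(x_-(c)).
\]
Writing $\widetilde V(x):=V(-x)=\sum_{n\ge2}\widetilde a_{n-2}x^n$ with $\widetilde a_{2j}=a_{2j}$, $\widetilde a_{2j+1}=-a_{2j+1}$ as in Lemma~\ref{lem:S0}, we have $x_+(c)=V^{-1}(c)$ and $x_-(c)=-\widetilde V^{-1}(c)$ for the positive branches, so $\Phi(c)=V'(V^{-1}(c))+\widetilde V'(\widetilde V^{-1}(c))$. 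By Lemma~\ref{lem:generalV} (using $2y\,dy/dx=V'(x)$ on $y^2=V(x)$) each summand equals $2\sqrt c$ divided by the power series $b_0+b_1\sqrt c+b_2c+\dots$ of that lemma; under $a_j\leftrightarrow\widetilde a_j$ the odd-degree $b$'s flip sign while the even-degree ones do not, so all integer powers of $c$ cancel in the sum and
\[
\Phi(c)=4a_0^{1/2}c^{1/2}+2Q\,c^{3/2}+\mathcal O(c^{5/2}),\qquad Q:=-\tfrac74a_0^{-5/2}a_1^2+3a_0^{-3/2}a_2,
\]
with $2Q=4(b_1^2-b_0b_2)/b_0^3$.

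With $\Phi$ in hand the two cases are short computations. For $A(\xi)=\xi^2$ we have $A''\equiv2$, so $I(E)=4\int_0^{\sqrt E}\Phi(E-\xi^2)\,d\xi$, and using $\int_0^{\sqrt E}(E-\xi^2)^{1/2}\,d\xi=\tfrac\pi4E$ and $\int_0^{\sqrt E}(E-\xi^2)^{3/2}\,d\xi=\tfrac{3\pi}{16}E^2$ gives $c_2=\tfrac32\pi Q$, hence $S_2(E)=-\tfrac18\pi Q+\mathcal O(E)$, which is \eqref{eq:S2xi2}. For $A(\xi)=2(1-\cos\xi)$ I would run the substitution from the proof of Lemma~\ref{lem:S0}: on $\xi\ge0$ set $\eta^2=2(1-\cos\xi)$, so $A''(\xi)=2\cos\xi=2-\eta^2$ and $d\xi=(1-\eta^2/4)^{-1/2}\,d\eta$, whence $A''(\xi)\,d\xi=(2-\tfrac34\eta^2+\mathcal O(\eta^4))\,d\eta$. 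By symmetry in $\xi$,
\[
I(E)=2\int_0^{\sqrt E}\Bigl(2-\tfrac34\eta^2\Bigr)\Phi(E-\eta^2)\,d\eta+\mathcal O(E^3),
\]
and the one additional integral needed, $\int_0^{\sqrt E}\eta^2(E-\eta^2)^{1/2}\,d\eta=\tfrac\pi{16}E^2$, produces $c_2=\tfrac32\pi Q-\tfrac38\pi a_0^{1/2}$, so $S_2(E)=-\tfrac18\pi Q+\tfrac1{32}\pi a_0^{1/2}+\mathcal O(E)$, which is \eqref{eq:S2cos}.

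I do not expect a real obstacle; once Lemma~\ref{lem:generalV} is available the argument is elementary and the work is bookkeeping. The one point that genuinely matters is the cancellation of all integer powers of $c$ in $\Phi$: if the linear-in-$c$ term of $\Phi$ did not vanish, $I(E)$ would acquire an $E^{3/2}$ term and $\partial_E^2I(E)$ would blow up like $E^{-1/2}$, contradicting the claimed expansion $S_2(E)=\mathrm{const}+\mathcal O(E)$. This is precisely the $a_1\leftrightarrow-a_1$ antisymmetry already used in Lemma~\ref{lem:S0}, applied here to $V'\circ V^{-1}$ instead of to $V^{-1}$. The remaining care is to confirm that the $\mathcal O(c^{5/2})$ tail in $\Phi$ and the $\mathcal O(\eta^4)$ tail in the Jacobian feed only $\mathcal O(E^3)$ into $I(E)$, and to track signs so that $-\tfrac1{24}\partial_E^2$ yields $+\tfrac{21}4a_0^{-5/2}a_1^2-9a_0^{-3/2}a_2$ (plus the extra $+\tfrac34a_0^{1/2}$ in the cosine case). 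Finally, since $p$ is separable with a nondegenerate minimum, $\det(p'')=A''V''$ is positive near the minimum, so $S_2$ defined through $\det(p'')$ agrees with the integral of $A''V''$ and no orientation subtlety arises.
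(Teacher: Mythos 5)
Your argument is correct and reaches \eqref{eq:S2xi2} and \eqref{eq:S2cos} exactly, but the computational route differs from the paper in one genuinely useful way. The paper, mirroring the proof of Lemma~\ref{lem:S0}, changes variables $y^2=V(x)$ and expands \emph{both} $V''(V^{-1}(y^2))=\sum c_j y^j$ and $dx/dy=\sum b_j y^j$ as power series, then multiplies them, invokes the $a_1\leftrightarrow -a_1$ antisymmetry to kill odd powers, and integrates in polar coordinates. You instead observe that the inner $x$-integral has a closed form by the fundamental theorem of calculus,
\[
\Phi(c)=\int_{\{V\le c\}}V''\,dx=V'(V^{-1}(c))+\widetilde V'(\widetilde V^{-1}(c))
=\frac{2y}{f(y)}+\frac{2y}{f(-y)},\qquad c=y^2,
\]
where $f(y)=b_0+b_1y+b_2y^2+\cdots$ is the series of Lemma~\ref{lem:generalV}. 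This bypasses the auxiliary coefficients $c_j$ entirely, the cancellation of integer powers of $c$ in $\Phi$ is immediate since $y\bigl(f(y)^{-1}+f(-y)^{-1}\bigr)$ is manifestly odd in $y$, and only one power series needs to be inverted rather than two multiplied. The remaining steps — the Jacobian $d\xi=(1+\eta^2/8+\mathcal O(\eta^4))\,d\eta$, the factor $A''=2-\eta^2$, and the elementary integrals $\int_0^{\sqrt E}(E-\eta^2)^{1/2}=\tfrac\pi4E$, $\int_0^{\sqrt E}(E-\eta^2)^{3/2}=\tfrac{3\pi}{16}E^2$, $\int_0^{\sqrt E}\eta^2(E-\eta^2)^{1/2}=\tfrac\pi{16}E^2$ — are the same in substance as the paper's polar-coordinate computation, and your coefficient $2Q=4(b_1^2-b_0b_2)/b_0^3$ agrees with the paper's $b_0c_2+b_1c_1+b_2c_0$ after substitution (both yield $\tfrac72 a_0^{-5/2}a_1^2-6a_0^{-3/2}a_2$ up to the overall sign). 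Your version is slightly more economical and the antisymmetry argument is cleaner; the paper's is more parallel to its own Lemma~\ref{lem:S0}. Both are correct.
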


\begin{proof}
We start with the case $A(\xi)=\xi^2$. We compute $S_2(E)=-\frac1{24}\partial_E^2 I(E)$, where after a change of variable
$$
I(E)=\int_{\{\xi^2+V(x)\le E,\ x>0\}}2V''(x)\,dx\,d\xi+\int_{\{\xi^2+V(-x)\le E,\ x>0\}}2V''(-x)\,dx\,d\xi.
$$
As in the proof of Lemma \ref{lem:S0}, we change variables $y^2=V(x)$ and $y^2=V(-x)$, respectively. Using Lemma \ref{lem:generalV} and writing $V''(V^{-1}(y^2))=\sum_{j=0}^3c_j y^j+\mathcal O(y^4)$, we get
$$
\begin{gathered}
c_0=2a_0,\quad c_1=6a_0^{-1/2}a_1,\quad c_2=-3a_0^{-2}a_1^2+12a_0^{-1}a_2,\\ c_3=\tfrac{30}8a_0^{-7/2}a_1^3-15a_0^{-5/2}a_1a_2+20a_0^{-3/2}a_3,
\end{gathered}
$$
and $I(E)=I_+(E)+I_-(E)$, where
\begin{align*}
I_\pm(E)&=\int_{\{\xi^2+y^2\le E,\ y\ge0\}}2\bigg(\sum_{j=0}^3(\pm1)^jc_jy^j+\mathcal O(y^4)\bigg)\bigg(\sum_{j=0}^3(\pm1)^jb_jy^j+\mathcal O(y^4)\bigg)\,dy\,d\xi,
\end{align*}
%\begin{align*}
%I_+&=\int_{\{\xi^2+y^2\le E,\ y\ge0\}}2\bigg(\sum_{j=0}^3c_jy^j+\mathcal O(y^4)\bigg)\bigg(\sum_{j=0}^3b_jy^j+\mathcal O(y^4)\bigg)\,dy\,d\xi,\\
%I_-&=\int_{\{\xi^2+y^2\le E,\ y\ge0\}}2\bigg(\sum_{j=0}^3(-1)^jc_jy^j+\mathcal O(y^4)\bigg)\bigg(\sum_{j=0}^3(-1)^jb_jy^j+\mathcal O(y^4)\bigg)\,dy\,d\xi,
%\end{align*}
%\begin{align*}
%I=\int_{\{\xi^2+y^2\le E,\ y\ge0\}}&\Big[2a_0+6a_0^{-1/2}a_1y+(-3a_0^{-2}a_1^2+12a_0^{-1}a_2)y^2\\&\quad+(\tfrac{30}8a_0^{-7/2}a_1^3-15a_0^{-5/2}a_1a_2+20a_0^{-3/2}a_3)y^3+\mathcal O(y^4)\Big]\\&\quad\times\Big[b_0+b_1y+b_2y^2+b_3y^3+\mathcal O(y^4)\Big]dy\,d\xi
%\end{align*}
with $b_j$ as in the lemma.
%\begin{align*}
%\tilde I=\int_{\{\xi^2+y^2\le E,\ y\ge0\}}&\Big[2a_0-6a_0^{-1/2}a_1y+(-3a_0^{-2}a_1^2+12a_0^{-1}a_2)y^2\\&\quad+(-\tfrac{30}8a_0^{-7/2}a_1^3+15a_0^{-5/2}a_1a_2-20a_0^{-3/2}a_3)y^3+\mathcal O(y^4)\Big]\\&\quad\times\Big[b_0-b_1y+b_2y^2-b_3y^3+\mathcal O(y^4)\Big]dy\,d\xi.
%\end{align*}
Performing the multiplication and computing $I_++I_-$ we get
$$
I(E)=\int_{\{\xi^2+y^2\le E,\ y\ge0\}}(4b_0c_0+4(b_0c_2+b_1c_1+b_2c_0)y^2+\mathcal O(y^4))\,dy\,d\xi.
$$
Changing to polar coordinates gives
\begin{equation*}%\label{eq:S0simple}
\begin{aligned}
I(E)&=\int_0^{E^{1/2}}\int_{-\pi/2}^{\pi/2}(4b_0c_0r+4(b_0c_2+b_1c_1+b_2c_0)r^3\cos^2t)\,dt\,dr+\mathcal O(E^3)\\&
=2b_0c_0\pi E+\tfrac{1}2(b_0c_2+b_1c_1+b_2c_0)\pi E^2+\mathcal O(E^3).
\end{aligned}
\end{equation*}
Hence,
$$
S_2(E)=-\tfrac1{24}I''(E)=-\tfrac{1}{24}(b_0c_2+b_1c_1+b_2c_0)\pi+\mathcal O(E).
$$
Inserting the expressions for $b_j$ and $c_j$ we obtain \eqref{eq:S2xi2}.

Next, consider the case when $A(\xi)=2(1-\cos \xi)$. Change variables from $x$ to $y$ as above, and then make the change of variables $\eta^2=2(1-\cos \xi)$ so that $\xi=\arccos(1-\eta^2/2)$ for $\xi\ge0$, $d\xi=(1+\eta^2/8+\mathcal O(\eta^4))\,d\eta$, and $A''(\xi)=2\cos\xi=2-\eta^2$. This gives
$$
\begin{aligned}
I(E)&=\int_{\{\eta^2+y^2\le E,\ y\ge0\}}\big[2b_0c_0+2(b_0c_2+b_1c_1+b_2c_0)y^2+\mathcal O(y^4)\big]\\&\qquad\qquad\qquad\qquad\times (2-\eta^2)(1+\eta^2/8+\mathcal O(\eta^4))\,dy\,d\eta.
\end{aligned}
$$
Here, $(2-\eta^2)(1+\eta^2/8+\mathcal O(\eta^4))=2(1-\frac38\eta^2+\mathcal O(\eta^4))$ so changing to polar coordinates and computing the resulting integrals we obtain
\begin{equation*}%\label{eq:S0simple}
\begin{aligned}
I(E)=2b_0c_0\pi E+\tfrac{1}2((b_0c_2+b_1c_1+b_2c_0)-\tfrac3{8}b_0c_0)\pi E^2+\mathcal O(E^3),
\end{aligned}
\end{equation*}
and
$$
S_2(E)=-\tfrac1{24}I''(E)=-\tfrac{1}{24}(b_0c_2+b_1c_1+b_2c_0-\tfrac38b_0c_0)\pi+\mathcal O(E).
$$
Inserting the expressions for $b_j$ and $c_j$ gives \eqref{eq:S2cos}.
\end{proof}

%\end{appendix}

\smallsection{Acknowledgements} 
We would like to thank Isabel Detherage, Nikhil Srivastava and Zach Stier
for interesting discussions and for bringing this topic to our attention. 
SB acknowledges support by the SNF Grant PZ00P2 216019, JW was partially supported by the Swedish Research Council grants 2019-04878 and 2023-04872,
and MZ by the Simons Targeted Grant Award No.~896630.

\end{document}